\newtheorem{theorem}{\bf Theorem}
\newtheorem{corollary}[theorem]{\bf Corollary}
\newtheorem{lemma}[theorem]{\bf Lemma}
\newtheorem{proposition}[theorem]{\bf Proposition}
\newtheorem{definition}[theorem]{\bf Definition}
\numberwithin{equation}{section}
\numberwithin{theorem}{section}
\numberwithin{figure}{section}
\def\R{\mathbb{R}}
\def\L{\mathbb{L}}
\def\R{\mathbb{R}}
\begin{document}
\renewcommand{\thefootnote}{}
\footnotetext{Research partially supported by Ministerio de Econom\'ia y Competitividad Grant No: MTM2016-80313-P and the `Maria de Maeztu'' Excellence
Unit IMAG, reference CEX2020-001105-M, funded by
MCIN/AEI/10.13039/501100011033} 

\title{Calabi-Bernstein type results for critical points of a weighted  area functional in $\R^3$ and $\L^3$} 
\author{$\text{A. Mart\'inez}^{1}\text{ and A.L. Mart\'inez-Trivi\~no}^{2}$}
\vspace{.1in}
\date{}
\maketitle
{
\noindent $^1$Departamento de Geometr\'\i a y Topolog\'\i a, Universidad de Granada, E-18071 Granada, Spain\\ \\
e-mails: $\text{amartine@ugr.es}^{1}$ and $\text{aluismartinez@ugr.es}^{2}$}
\begin{abstract}
In this paper we prove some Calabi-Bernstein type  and non-existence results concerning complete $[\varphi,\vec{e}_3]$-minimal surfaces in $\R^3$ whose Gauss maps lie on compacts subsets of open hemispheres of $\mathbb{S}^2$. We also give a general non-existence result for complete spacelike $[\varphi,\vec{e}_3]$-maximal surfaces in $\L^3$ and, in particular, we obtain a Calabi-Bernstein type result when $\dot{\varphi}$ is bounded.
\end{abstract}
\vspace{0.2 cm}

\noindent 2020 {\it  Mathematics Subject Classification}: {53C42, 35J60  }

\noindent {\it Keywords: } Calabi-Bernstein, $[\varphi,\vec{e}_{3}]$-minimal surface, spacelike $[\varphi,\vec{e}_{3}]$-maximal surface, Bochner formula, minimal surfaces, maximum principles. 
\everymath={\displaystyle}
\section{Introduction.}
At the beginning of the 20th-century, Bernstein \cite{B} proved that the only entire minimal graph $\Sigma$ in $\mathbb{R}^{3}$ is the hyperplane. De Giordi \cite{G} extended this result to $\mathbb{R}^{4}$ and Almgren \cite{A} to $\mathbb{R}^{5}$. In fact, the Bernstein's Theorem is true until $\mathbb{R}^{7}$, see for instance \cite{Simons} and  Bombieri, De Giorgi and Giusti \cite{BGG} gave a counterexample for $\mathbb{R}^{n}$ with $n\geq 8$. However, under the additional assumption that the norm of the gradient of our graph is uniformly bounded, Moser \cite{Moser} proved that any entire minimal graph must be a hyperplane for any dimension.
\

In 1959, a generalized version of the Bernstein problem was given by Osserman  \cite{Oss},  who proved  the Nirenberg's conjecture, namely, he showed that any complete minimal surface  in $\mathbb{R}^{3}$ whose  Gauss map omits a neighborhood of some point at the sphere,  must be a plane. This result was generalized to $\mathbb{R}^{n}$ by Chern \cite{Chern}. Few years later, Fujimoto \cite{Fuj} showed that the plane is the only complete orientable minimal in $\mathbb{R}^{3}$ whose Gauss map omits at least 5 points of the sphere.
\

This kind of results have been also proved for other families of surfaces in different ambient spaces. For instance,  Hoffman, Osserman and Schoen \cite{HOS} proved a version of the Osserman's result for CMC surfaces. Barros, Aquino and Lima \cite{BAL} for complete CMC hypersurfaces in the hyperbolic space with prescribed Gauss map and Fern\'andez, G\'alvez and Mira \cite{FGM} for elliptic Weingarten complete multigraphs with a quasiconformal Gauss map that omits an open hemisphere.

In 1970 and for Lorentzian ambient spaces, Calabi \cite{CA} proved   that the only  entire maximal graph in the 3-dimensional Lorentz-Minkowski space $\mathbb{L}^{3}$ is a plane. Aledo, Rubio and Romero \cite{ARR} obtained that the only complete maximal surface in $\mathbb{L}^{3}$ is a spacelike plane.
 In 2003,  Al\'ias and Mira \cite{AM} gave a generalization to higher dimension and in 2009, Albujer and Al\'ias \cite{AA}  proved the Calabi-Bernstein result for maximal surfaces in Lorentzian product spaces.

\

The main goal of this work is to prove  Calabi-Bernstein's type results for  critical points of the  weighted area functional 
\begin{equation}
\label{area}
\mathcal{A}^{\varphi}(\Sigma)=\int_{\Sigma}e^{\varphi}\, d \Sigma
\end{equation}
on isometric immersions of  Riemannian  surfaces $\Sigma$ in a domain $\mathfrak{D}^3$ of $ \mathbb{R}^3$ (or $\mathbb{L}^3$) when $\varphi$ is the restriction on $\Sigma$ of a smooth function depending only on the  coordinate of $\mathfrak{D}^3$ in the direction of  $\vec{e}_3=(0,0,1)$ and where $d\Sigma$ denotes the volume element induced on $\Sigma$ by the  Euclidean (or Lorentzian) metric $\langle\cdot,\cdot\rangle:= dx^2+dy^2+dz^2$ ($\langle\cdot,\cdot\rangle_{\mathbb{L}^3}:= dx^2+dy^2-dz^2$). 

The Euler-Lagrange equation of \eqref{area} is given in terms of the mean curvature vector $\textbf{H}$ of $\Sigma$ as follows
\begin{equation}
\label{meancurvature}
\textbf{H}=(\overline{\nabla}\varphi)^{\perp} = \dot{\varphi} \ \vec{e}_3^{\,\perp},
\end{equation}
here $\perp$ denotes the projection on the normal bundle, and $\overline{\nabla}$ stands the usual gradient operator in $\mathbb{R}^{3}$ (or $\mathbb{L}^3$).
\\

Any critical point of \eqref{area}  in $\mathbb{R}^{3}$ (or $\mathbb{L}^3$)   will be called  {\sl $[\varphi, \vec{e}_3]$-minimal } (or {\sl spacelike $[\varphi, \vec{e}_3]$-maximal}) surface.
Interesting examples of these families of surfaces are:
\begin{itemize}
\item the classical minimal surfaces in $\mathbb{R}^{3}$ and the the spacelike maximal surfaces in $\mathbb{L}^{3}$ when $\varphi$ is a constant.
\item the translating solitons:  if $\varphi$ is just the height function, $\varphi(p)=\langle p,\vec{e}_3\rangle$, that is,  surfaces such that $$ t \rightarrow \Sigma + t \vec{e}_3 $$
is a mean curvature flow, i.e. the normal component of the  velocity at  each point is equal to the mean curvature at that point. 
\item the singular $\alpha$-minimal (spacelike $\alpha$-maximal) surfaces: if  $\varphi(p) =\alpha \log \langle p,\vec{e}_3\rangle$, $\alpha$=const.  For surfaces in  $\mathbb{R}^3$ and when  $\alpha=1$,  $\Sigma$ describes the shape of a ``hanging roof'', i.e. a heavy surface in a gravitational field that, according to the architect F. Otto \cite[p. 290]{Otto} are of importance for the construction of perfect domes. 
\end{itemize} 
This kind of surfaces has been widely studied specially from the viewpoint of calculus of variations. Classical results about the Euler equation and the existence and regularity for  the solutions of the Plateau problem for \eqref{area} can be found in \cite{BHT, H1, H2, HK, T}.
But the situation have changed in very recent years and nowdays  very and interesting geometric properties of these  surfaces are known. We mention, the convexity result of Spruck-Xiao \cite{SX} for complete mean-convex translating solitons which solved a Wang conjeture \cite{Wa},  they proved that the only entire vertical graph translating soliton is the rotationally symmetric Bowl soliton.  Hoffman, Ilmanen, Mart\'in y White \cite{HIMW} have classified all complete vertical graphs translating soliton in $\mathbb{R}^{3}$. 

B\"ome, Hildebrant and Tausch \cite{BHT} have characterized the 2-dimensional singular $\alpha$-catenary and Dierkes \cite{D} together with L\'opez \cite{RL} have classified the singular $\alpha$-catenaries and the rotationally symmetric singular $\alpha$-minimal examples.
\

Concerning  to Bernstein's problem, Bao-Shi \cite{BS}  proved that if $\Sigma$ is a convex complete translating soliton in $\mathbb{R}^3$ whose Gauss map lies in a closed ball of $\mathbb{S}^{2}$ of radii less than $\pi/2$, then $\Sigma$ must be a vertical plane, Kunikawa \cite{K}  generalized this result for arbitrary codimension  and Qiu \cite{HQ} have also given  a Bernstein type result for vertical translating soliton graphs in $\mathbb{R}^{3}$.
\\

Having  good  control over the geometry of a  $[\varphi,\vec{e}_{3}]$-minimal surface in $\mathbb{R}^3$ make it necessary to impose some analytical constraints on the function $\varphi$, see for instance \cite{MM,MM1,MM2,MMJ,MJ}. As a general condition we are going to consider  $\varphi:]a,b[\rightarrow \mathbb{R}$,  satisfying either 
\begin{align}
& \text{$\varphi$ monotone,}\quad \dddot{\varphi\hspace{0pt}} \dot{\varphi}\geq 0 \ \ \text{and} \ \  \ddot{\varphi}\leq 0 \quad \text{on $]a,b[$,}\label{hy1}
\end{align}
or
\begin{align}
&\text{$\varphi$ monotone,}\quad \text{and} \quad \ddot{\varphi}\geq 0 \quad \text{on $]a,b[$}.\label{hy11}
\end{align}
and  $(\ \dot{ }\ )$  denotes derivate with respect to the third coordinate.

Our main results in this work are the following Calabi-Bernstein type classification results:
\setcounter{theorem}{0}
\renewcommand{\thetheorem}{\Alph{theorem}}
\begin{theorem}
\label{Main1} Consider $\varphi:\R \rightarrow \mathbb{R}$  satisfying \eqref{hy1}  and 
let  $\Sigma$ be a complete $[\varphi,\vec{e}_{3}]$-minimal surface in $\mathbb{R}^{3}$ with  bounded mean curvature $H$ and  Gauss curvature $K\geq 0$. If $\dot{\varphi}$ has the following asymptotic behaviour
\begin{align}
& \langle p, \vec{e}_3\rangle \, \vert\dot{\varphi} \vert\leq C \vert p\vert, \quad p\rightarrow \infty,
\end{align}
and the Gauss map $N:\Sigma\rightarrow\mathbb{S}^{2}$ of $\Sigma$ satisfies 
\begin{equation}
\langle N,y_{0}\rangle\leq -2\varepsilon, \qquad \text{on $\Sigma$},\label{N1}
\end{equation}
for some constants $C>0$, $\varepsilon>0$, where 
$\vert \cdot \vert:\Sigma\rightarrow \mathbb{R}$ is the extrinsic distance with respect to the origin   and $y_{0}\in\mathbb{S}^{2}$,  $\langle y_0,\vec{e}_{3}\rangle =0$, then $\Sigma$ must be a plane.
\end{theorem}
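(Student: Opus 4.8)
The plan is to prove that the Gauss map is constant by analysing the horizontal angle function $u=\langle N,y_0\rangle$, which by \eqref{N1} satisfies $u\le-2\varepsilon<0$ everywhere and hence is bounded. First I would record two elementary consequences of the curvature hypotheses that make the argument run. Since $\Sigma$ is a surface, the Gauss equation gives $|A|^2=(\mathrm{tr}\,A)^2-2K$, and because $\mathrm{tr}\,A$ is (a fixed multiple of) the bounded mean curvature $H$ while $K\ge0$, the length $|A|$ of the second fundamental form is uniformly bounded. Moreover, as $N$ is a unit vector with $\langle N,y_0\rangle^2\ge4\varepsilon^2$ and $\langle y_0,\vec{e}_3\rangle=0$, the vertical component $\Theta=\langle N,\vec{e}_3\rangle$ obeys $\Theta^2\le1-4\varepsilon^2<1$, so it stays away from the poles. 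Finally, $\mathrm{Ric}=K\,g\ge0$, which will be the source of the maximum principle at infinity.

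The key computation is a Bochner/divergence-type identity for the drift Laplacian $\mathcal{L}_\varphi=\Delta+\langle\nabla\varphi,\nabla\,\cdot\,\rangle$ naturally attached to \eqref{area}. Writing the minimality equation \eqref{meancurvature} as $\mathrm{tr}\,A=\dot\varphi\,\Theta$ and using the Codazzi equation together with the self-adjointness of $A$ (so that the two first-order terms coming from $\nabla(\mathrm{tr}\,A)=\ddot\varphi\,\Theta\,\vec{e}_3^{\,T}-\dot\varphi\,A\vec{e}_3^{\,T}$ and from the drift cancel), I would obtain, for the constant horizontal vector $y_0$,
\[
\mathcal{L}_\varphi\langle N,y_0\rangle=-\langle N,y_0\rangle\bigl(|A|^2-\ddot\varphi\,\Theta^2\bigr).
\]
Since $\ddot\varphi\le0$ by \eqref{hy1}, the bracket is $\ge|A|^2\ge0$, and as $\langle N,y_0\rangle\le-2\varepsilon<0$ the right-hand side is nonnegative. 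Thus $u=\langle N,y_0\rangle$ is a bounded $\mathcal{L}_\varphi$-subharmonic function on $\Sigma$.

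Having reduced the problem to a Liouville-type statement, I would invoke ($\varphi$-)parabolicity of $\Sigma$: a bounded-above $\mathcal{L}_\varphi$-subharmonic function on a parabolic weighted surface is constant. Parabolicity itself comes from completeness and $K\ge0$, since nonnegative curvature forces at most quadratic intrinsic area growth (so that the integral test $\int_1^\infty r\,\mathrm{Area}(B_r)^{-1}\,dr=\infty$ holds, and in addition the total curvature is finite, giving parabolic conformal type). To upgrade this to the weighted setting one must control the growth of $e^\varphi$, and it is exactly here that the asymptotic bound $\langle p,\vec{e}_3\rangle\,|\dot\varphi|\le C|p|$, the monotonicity and sign conditions on $\varphi$ in \eqref{hy1}, and the bounded geometry established above are combined so that the weighted (logarithmic) cutoff estimate closes. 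Verifying this $\varphi$-parabolicity, i.e. making the weighted cutoff integrals vanish, is the technical heart and the step I expect to be the main obstacle.

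Once $u\equiv\mathrm{const}$, the displayed identity yields $0=-u\bigl(|A|^2-\ddot\varphi\,\Theta^2\bigr)$ with $u\ne0$; since both $|A|^2\ge0$ and $-\ddot\varphi\,\Theta^2\ge0$, this forces $|A|\equiv0$. Hence $\Sigma$ is totally geodesic and, being complete, must be a plane.
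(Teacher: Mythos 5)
Your computation up to the identity $\Delta^{\varphi}u=-u\,\bigl(\vert A\vert^{2}-\ddot{\varphi}\,\eta^{2}\bigr)$ for $u=\langle N,y_{0}\rangle$ is correct --- it is exactly the paper's equation \eqref{c1} written for $\phi=1-u$ --- and bounding $\vert A\vert^{2}\leq H^{2}$ via $K\geq0$ is also how the paper starts. The gap is the step you yourself flag as the ``technical heart'': the Liouville property you invoke is not merely unproven, it is false under the hypotheses of the theorem. Take $\Sigma$ the vertical plane $\{x=0\}$ with $\varphi(t)=t$: this configuration satisfies every assumption of Theorem \ref{Main1} ($H=0$, $K=0$, condition \eqref{hy1} holds since $\ddot{\varphi}=\dddot{\varphi}=0$, the asymptotic bound holds with $C=1$ because $\langle p,\vec{e}_{3}\rangle\vert\dot{\varphi}\vert=z\leq\vert p\vert$, and \eqref{N1} holds with $y_{0}=-N$). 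On this surface $\Delta^{\varphi}=\partial_{y}^{2}+\partial_{z}^{2}+\partial_{z}$, and $v=-e^{-z}$ is a nonconstant, bounded-above $\Delta^{\varphi}$-harmonic function, while the truncation $\max\{-e^{-z},-1\}$ is (in the distributional sense) a bounded nonconstant $\Delta^{\varphi}$-subharmonic function. So $\varphi$-parabolicity fails on admissible surfaces: completeness and $K\geq0$ give quadratic growth only for the unweighted area, while \eqref{hy1} together with your asymptotic hypothesis still permit $\dot{\varphi}$ to be a nonzero constant, so the weight $e^{\varphi}$ grows exponentially along the surface and no cutoff estimate can make the weighted capacity integrals vanish. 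Nor can you substitute the weak maximum principle (the tool the paper develops in Section \ref{s2}): since your drift inequality is linear in $\vert A\vert^{2}$, a sequence $p_{n}$ with $u(p_{n})\rightarrow\sup_{\Sigma}u$ and $\Delta^{\varphi}u(p_{n})<1/n$ only yields $(\vert A\vert^{2}-\ddot{\varphi}\,\eta^{2})(p_{n})\rightarrow0$, i.e. $\inf_{\Sigma}\vert A\vert=0$, not $\vert A\vert\equiv0$.

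This is precisely why the paper does not work with $u$ alone. It combines \eqref{c1} with the Simons-type formula \eqref{b3} for $\Delta^{\varphi}\vert\mathcal{S}\vert^{2}$ --- and here the parts of the hypotheses you never use, namely $K\geq0$ together with $\varphi$ monotone and $\dddot{\varphi}\dot{\varphi}\geq0$, are exactly what dispose of the third-derivative term (Claim 1) --- to obtain the \emph{quadratic} inequality \eqref{c6}, ${\cal J}\psi\geq2\varepsilon(b-1)\psi^{2}$, for $\psi=\vert\mathcal{S}\vert^{2}/(b-\phi)^{2}$ and the drifted operator ${\cal J}=\Delta^{\varphi+2\log(\phi-b)}$. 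The quadratic term is what converts pointwise information at a maximum into a global decay: maximizing the Bao--Shi type cutoff $\lambda=(R^{2}-\rho^{2})^{2}\psi$ on $\Sigma\cap B_{R}$ (this localization is where the hypothesis $\langle p,\vec{e}_{3}\rangle\vert\dot{\varphi}\vert\leq C\vert p\vert$ actually enters, controlling the term $\dot{\varphi}\mu$ in ${\cal J}\rho^{2}$) gives $\sup_{\Sigma_{R}}\lambda\leq\widetilde{C}_{1}R^{2}(\widetilde{C}_{2}+R)$, hence $\psi\leq\frac{16}{9}\widetilde{C}_{1}(\widetilde{C}_{2}+R)/R^{2}$ on $\Sigma_{R/2}$, and $\psi\equiv0$ as $R\rightarrow\infty$. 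To salvage your more elementary route you would have to either prove a Liouville theorem for a strictly narrower class of weights (excluding linear $\varphi$, which the theorem expressly allows and which is the translating-soliton case), or reproduce some quadratic self-improvement of the kind above; as written, the proposal breaks exactly at the step you deferred.
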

\begin{theorem}
\label{Main2} Consider $\varphi:\R \rightarrow \mathbb{R}$  satisfying \eqref{hy1} and let
 $\Sigma$ be a properly embedded  $[\varphi,\vec{e}_{3}]$-minimal surface in $\mathbb{R}^{3}$ with bounded mean curvature $H$ and   Gauss curvature $K\geq 0$. If $\dot{\varphi}$ has the following asymptotic behaviour
\begin{align}
&  \vert\dot{\varphi} \vert\leq C \vert p\vert \log\vert p\vert, \quad p\rightarrow \infty,\label{hy3}
\end{align}
and the Gauss map $N$ of $\Sigma$ satisfies 
\eqref{N1}, then $\Sigma$ must be a plane.
\end{theorem}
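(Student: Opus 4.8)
The plan is to show that the hypotheses force $\Sigma$ to be totally geodesic, so that $\vert A\vert\equiv 0$ and $\Sigma$ is a plane. First I would record the consequence of $K\ge 0$ together with the boundedness of $H$: since $K\ge0$ controls $\vert A\vert^2$ in terms of $H^2$ (in the standard normalization $\vert A\vert^2 = 4H^2-2K\le 4H^2$), the norm of the second fundamental form is uniformly bounded, so $\Sigma$ has bounded geometry. I would then work with the drift Laplacian $\Delta_\varphi := \Delta + \langle\nabla\varphi,\nabla\,\cdot\,\rangle$, the natural operator for $[\varphi,\vec{e}_3]$-minimal surfaces (self-adjoint for the weighted measure $e^{\varphi}\,d\Sigma$), and introduce the angle function $u := \langle N, y_0\rangle$. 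By \eqref{N1} we have $u \le -2\varepsilon < 0$, and since $\vert u\vert\le 1$ the positive function $w := -u$ satisfies $2\varepsilon \le w \le 1$, i.e.\ it is bounded and bounded away from zero.

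The analytic heart is a Jacobi-type identity for $u$. Using the structure equations of the immersion together with the $[\varphi,\vec{e}_3]$-minimal equation \eqref{meancurvature} (a Bochner/Simons computation is convenient here, since $\vert\nabla N\vert^2=\vert A\vert^2$), one derives for a constant vector $a$ a formula of the shape
\[
\Delta_\varphi\langle N,a\rangle = -\bigl(\vert A\vert^2 + \ddot\varphi\,\langle N,\vec{e}_3\rangle^2\bigr)\langle N,a\rangle + (\text{lower order terms in }\dot\varphi,\dddot\varphi).
\]
Taking $a = y_0$ and using $\langle y_0,\vec{e}_3\rangle = 0$, the conditions collected in \eqref{hy1} — monotonicity of $\varphi$, $\ddot\varphi\le 0$ and $\dddot\varphi\,\dot\varphi\ge0$ — are exactly what is needed to guarantee that every lower-order contribution has the correct sign, so that $w=-u$ becomes $\Delta_\varphi$-superharmonic:
\[
\Delta_\varphi w \le -\vert A\vert^2\, w \le 0 .
\]
Dividing by $w>0$ and using $\Delta_\varphi\log w = \Delta_\varphi w/w - \vert\nabla\log w\vert^2$ then yields
\[
\Delta_\varphi \log w \le -\vert A\vert^2 - \vert\nabla\log w\vert^2 \le 0,
\]
so $\log w$ is a bounded $\Delta_\varphi$-superharmonic function on $\Sigma$.

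The final ingredient is $\varphi$-parabolicity of $\Sigma$. This is where proper embeddedness and the precise growth \eqref{hy3} enter, replacing the Omori--Yau argument used in Theorem~\ref{Main1}. Because $K\ge0$ makes $\Sigma$ locally convex and $\Sigma$ is properly embedded, the extrinsic area growth is at most quadratic, $\mathrm{Area}(\Sigma\cap B_r)\le C r^2$; integrating the bound \eqref{hy3} on $\dot\varphi$ then controls the weight, giving a weighted volume estimate of the borderline form $V_\varphi(r) := \int_{B_r} e^{\varphi}\,d\Sigma \le C\, r^2\log r$. By the Grigor'yan--Karp criterion $\int^\infty \frac{r\,dr}{V_\varphi(r)} = \infty$, so $(\Sigma, e^{\varphi}d\Sigma)$ is $\varphi$-parabolic and every bounded $\Delta_\varphi$-superharmonic function is constant. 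Hence $\log w$ is constant; feeding this back into the displayed inequality forces $\vert\nabla\log w\vert\equiv 0$ and $\vert A\vert\equiv 0$, so $\Sigma$ is totally geodesic, i.e.\ a plane.

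I expect the main obstacle to be the parabolicity step, namely making the weighted volume estimate $V_\varphi(r)\le C r^2\log r$ rigorous. The logarithmic factor in \eqref{hy3} is not incidental: it is precisely the borderline growth for which the Grigor'yan--Karp integral still diverges, so the estimate must be carried out carefully, using proper embeddedness to pass from the ambient quadratic area bound to a bound on intrinsic balls and to ensure that the weight $e^{\varphi}$ contributes no more than the allowed logarithmic factor. The companion task of extracting the correct sign in the Jacobi identity from \eqref{hy1} is delicate but essentially algebraic once the Bochner computation is in place.
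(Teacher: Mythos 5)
Your Jacobi-type reduction is correct, and in fact cleaner than you suppose: for the horizontal direction $a=y_{0}$ the ``lower order terms'' vanish identically, and one has exactly $\Delta^{\varphi}\langle N,y_{0}\rangle=-(\vert\mathcal{S}\vert^{2}-\ddot{\varphi}\,\eta^{2})\langle N,y_{0}\rangle$ (this is \eqref{c1} in the paper), for which only $\ddot{\varphi}\le 0$ is needed. The genuine gap is the parabolicity step, and it is fatal. The hypothesis \eqref{hy3} bounds $\dot{\varphi}$, not $\varphi$: integrating it controls $\varphi$ only up to order $\vert p\vert^{2}\log\vert p\vert$, so the weight $e^{\varphi}$ may grow like $e^{Cr^{2}\log r}$ --- the logarithm lands in the \emph{exponent}, not as a multiplicative factor. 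Already for the translating-soliton weight $\varphi(t)=t$, which satisfies \eqref{hy1} and \eqref{hy3}, one has $e^{\varphi}=e^{z}$, and since \eqref{N1} forces $\sup_{\Sigma}\mu=+\infty$ (as noted in the paper via Ekeland's principle), the weighted volume $V_{\varphi}(r)$ is exponentially large; your claimed estimate $V_{\varphi}(r)\le Cr^{2}\log r$ is false, the Grigor'yan--Karp integral converges, and no $\varphi$-parabolicity is available. (Your unweighted quadratic area bound is also unjustified as stated --- complete flat surfaces such as cylinders over spirals are properly embedded with $K\ge 0$ and superquadratic extrinsic area growth, though this could perhaps be repaired using \eqref{N1}; it is secondary in any case.) You have also misread the role of the logarithm in \eqref{hy3}: it is not the borderline for Karp's volume criterion but for a Khas'minskii-type condition. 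With $\gamma=2\log\vert p\vert$, properness and \eqref{hy3} give ${\cal J}\gamma\le C\gamma$ outside a compact set, and by Theorem \ref{criteria} this yields only the \emph{weak maximum principle} ($f$-stochastic completeness) for the drift operator ${\cal J}=\Delta^{\varphi+2\log(\phi-b)}$ --- a much weaker conclusion than the Liouville property for bounded superharmonic functions that your argument requires.

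This weaker tool cannot close your linear inequality: applying the weak maximum principle to $w$ with $\Delta^{\varphi}w\le-\vert A\vert^{2}w$ only produces a sequence $\{p_{n}\}$ extremizing $w$ along which $\vert A\vert(p_{n})\to 0$, not $\vert A\vert\equiv 0$. This is precisely why the paper does not work with $\langle N,y_{0}\rangle$ alone but with the quotient $\psi=\vert\mathcal{S}\vert^{2}/(b-\phi)^{2}$, $\phi=1-\langle N,y_{0}\rangle$: combining the Simons-type formula \eqref{b3} with \eqref{c1} --- and this is where the remaining hypotheses enter, namely $\dddot{\varphi}\dot{\varphi}\ge 0$ from \eqref{hy1} together with $K\ge 0$ and bounded $H$, via Claim 1 --- yields the \emph{quadratic} differential inequality ${\cal J}\psi\ge 2\varepsilon(b-1)\psi^{2}$ of \eqref{c6}. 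Since $\psi$ is bounded ($H$ bounded, $K\ge 0$, $\phi-b\ge\varepsilon$), the weak maximum principle applied at $\sup_{\Sigma}\psi$ gives $0\ge 2\varepsilon(b-1)(\sup_{\Sigma}\psi)^{2}$, hence $\mathcal{S}\equiv 0$ and $\Sigma$ is a plane. To repair your proof you would have to abandon parabolicity in favour of this weak-maximum-principle argument, and correspondingly trade your linear zeroth-order inequality for one with the superlinear structure of \eqref{c6}.
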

\begin{theorem}\label{Main3} Consider $\varphi:]a,b[ \rightarrow \mathbb{R}$  satisfying \eqref{hy11} and 
\begin{align}
&  \vert\dot{\varphi} \vert\leq C \vert p\vert, \quad p\rightarrow \infty.\label{hy4}
\end{align}
Then, there is no properly embedded $[\varphi,\vec{e}_{3}]$-minimal surfaces $\Sigma$ in $\mathbb{R}^{2}\times ]a,b[$ whose Gauss map $N$ verifies 
\begin{equation}
\langle N,\vec{e}_3\rangle \geq \epsilon>0 \label{N2}
\end{equation} 
and such that either $\inf_\Sigma \dot{\varphi}^2>0$ or $\inf_\Sigma \ddot{\varphi}>0$.
\end{theorem}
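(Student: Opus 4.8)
The plan is to rule out such a surface by proving that the angle function $\langle N,\vec e_3\rangle$ is forced to be constant, which collapses $\Sigma$ onto a horizontal plane incompatible with the two quantitative alternatives.

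First I would reduce to the graphical case. Since $\Sigma$ is properly embedded in the slab $\R^2\times]a,b[$ and its Gauss map satisfies \eqref{N2}, the vertical projection $\pi:\Sigma\to\R^2$ is a local diffeomorphism with $\langle N,\vec e_3\rangle\ge\epsilon$; because the third coordinate is confined to the bounded interval $(a,b)$, proper embedding forces $\pi$ to be a proper covering and hence a diffeomorphism onto $\R^2$. Thus $\Sigma$ is an entire graph $z=u(x,y)$ with $|\nabla u|\le\sqrt{1/\epsilon^2-1}$, so the induced metric is complete and quasi-isometric to the flat metric on $\R^2$. The growth hypothesis \eqref{hy4}, i.e. $|\dot\varphi|\le C|p|$, is precisely what controls the weight along $\Sigma$ so that $e^{\varphi}\,d\Sigma$ has at most quadratic volume growth; hence the weighted surface $(\Sigma,e^{\varphi}d\Sigma)$ is $\varphi$-parabolic, meaning every bounded $\Delta_\varphi$-superharmonic function is constant, where $\Delta_\varphi=\Delta+\langle\overline{\nabla}\varphi,\nabla\,\cdot\,\rangle$ is the drift Laplacian.

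The heart of the matter is a Bochner/Jacobi computation for the angle function $\Theta:=\langle N,\vec e_3\rangle$. Writing the position vector as $\mathbf{x}$ and using $\Delta_\Sigma\mathbf{x}=\mathbf{H}=\dot\varphi\,\Theta\,N$ from \eqref{meancurvature}, one finds $|\nabla h|^2=1-\Theta^2$ for the height $h=\langle\mathbf{x},\vec e_3\rangle$. Differentiating $\Theta$, invoking the Codazzi equation in flat ambient space and the relation $\mathbf{H}=\dot\varphi\,\Theta\,N$, and finally adding the drift term $\dot\varphi\langle\nabla h,\nabla\Theta\rangle$ (which cancels the first-order contribution), I expect to reach the clean identity
\begin{equation*}
\Delta_\varphi\Theta=-\,\Theta\left(\ddot\varphi\,(1-\Theta^2)+|A|^2\right).
\end{equation*}
Since $\Theta\ge\epsilon>0$, $\ddot\varphi\ge0$ by \eqref{hy11}, $1-\Theta^2\ge0$ and $|A|^2\ge0$, this gives $\Delta_\varphi\Theta\le0$, so $\Theta$ is a bounded $\Delta_\varphi$-superharmonic function.

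By $\varphi$-parabolicity, $\Theta$ must be constant, and then the identity forces $|A|^2\equiv0$ together with $\ddot\varphi\,(1-\Theta^2)\equiv0$. A complete totally geodesic surface is an affine plane; lying in the slab with $\Theta\ge\epsilon$, a tilted plane would have unbounded height and a vertical plane would have $\Theta=0$, so $\Sigma$ must be a horizontal plane $z=c$ with $c\in(a,b)$ and $\Theta\equiv1$. Minimality then yields $0=\mathbf{H}=\dot\varphi(c)\,N$, hence $\dot\varphi(c)=0$. This kills both alternatives: if $\inf_\Sigma\dot\varphi^2>0$ we reach a contradiction at once, since $\dot\varphi^2\equiv\dot\varphi(c)^2=0$ on the plane; if instead $\inf_\Sigma\ddot\varphi>0$, monotonicity of $\varphi$ together with $\ddot\varphi\ge0$ makes $\dot\varphi$ non-decreasing of constant sign, so $\dot\varphi(c)=0$ forces $\dot\varphi$ to vanish identically on one side of $c$ and therefore $\ddot\varphi(c)=0$, again a contradiction. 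Hence no such $\Sigma$ exists. I expect the main obstacle to be the verification that \eqref{hy4} indeed yields $\varphi$-parabolicity: one must convert the extrinsic bound $|\dot\varphi|\le C|p|$ into a genuine at-most-quadratic growth estimate for the weighted area of intrinsic balls, using that the height is bounded and $\nabla u$ is bounded; the reduction to an entire graph via proper embedding is the other delicate point, since a priori one has to exclude $\Sigma$ escaping through the top or bottom faces of the slab.
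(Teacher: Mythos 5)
Your Bochner identity for the angle function is exactly the paper's starting point (it is \eqref{thc}, quoted from \cite[Lemma 2.1]{MM}, since $\vert\nabla\mu\vert^2=1-\eta^2$), and your endgame would work if the angle function were known to be constant; but the bridge you propose --- $\varphi$-parabolicity of $(\Sigma,e^{\varphi}d\Sigma)$ deduced from \eqref{hy4} via ``at most quadratic weighted volume growth'' --- is a genuine gap. Condition \eqref{hy4} bounds $\dot{\varphi}$, not $\varphi$: since by \eqref{hy11} $\varphi$ is monotone with $\ddot{\varphi}\geq 0$ on the bounded height interval, integrating only gives $\varphi(\mu(p))\leq\varphi(z_0)+C(b-a)\vert p\vert$, and this linear growth is essentially attained (take e.g. $\varphi(z)=1/(b-z)$ with the surface approaching height $b$ at the rate permitted by \eqref{hy4}). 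Then $e^{\varphi}$ can grow exponentially in $\vert p\vert$, the weighted area of balls grows exponentially, and the volume-growth criterion for parabolicity fails outright. Moreover, parabolicity (every bounded drift-superharmonic function is constant) is strictly stronger than what the hypotheses deliver. What \eqref{hy4} actually buys --- and this is the paper's route --- is the weak, Omori--Yau type maximum principle for $\Delta^{\varphi}$: with $\gamma(p)=2\log\vert p\vert$, properness gives $\gamma\to+\infty$, $\vert\nabla\gamma\vert$ is bounded, and the drift term $2\mu\dot{\varphi}/\vert p\vert^2$ is bounded precisely because the height is confined to $]a,b[$ and $\vert\dot{\varphi}\vert\leq C\vert p\vert$, so $\Delta^{\varphi}\gamma\leq C$ outside a compact set (conditions \eqref{comp1}--\eqref{comp3}) and \cite[Theorem 3.2]{AMR} applies.

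With that weaker principle you do not get constancy of $\eta$, and you do not need it: apply it at $\eta_{\star}=\inf_{\Sigma}\eta\geq\epsilon>0$ to produce a sequence $\{p_n\}$ with $\eta(p_n)\to\eta_{\star}$, $\nabla\eta(p_n)\to 0$ and $\liminf_n\Delta^{\varphi}\eta(p_n)\geq 0$, then pass to the limit in \eqref{thc}, using $2\vert A\vert^2\geq H^2=\dot{\varphi}^2\eta^2$ so that the alternative $\inf_{\Sigma}\dot{\varphi}^2>0$ makes the right-hand side strictly negative; in the alternative $\inf_{\Sigma}\ddot{\varphi}>0$, either $1-\eta_{\star}^2>0$ gives the same strict negativity, or $\eta\equiv 1$ and $\Sigma$ is a horizontal plane, which your final paragraph already excludes. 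Finally, your reduction to an entire graph is both incomplete and unnecessary: as you yourself note, properness in the \emph{open} slab does not preclude $\Sigma$ accumulating on the faces $z=a$, $z=b$, so the covering argument does not close (the height along a lifted segment is Lipschitz but may tend to $b$ in finite time); the paper's argument runs directly on the properly embedded surface, with no graph structure and no completeness of the induced metric required.
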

Let us make some comments about these theorems. Condition  \eqref{N1} means that the image of the Gauss map lies onto a compact region in the open hemisphere bounded by the semicircle determined by a vertical plane 
through the origen. In particular, from the Ekeland's principle \cite[Proposition 2.2]{AMR},   the height function $\mu(p)= \langle p, \vec{e}_{3}\rangle$ satisfies that
$$\mu_\star = \inf_\Sigma \mu = -\infty, \qquad   \mu^\star = \sup_\Sigma \mu = +\infty,$$ otherwise, there exists a sequence of points $\{q_n\}$ in $\Sigma$ such that $N(q_n)\rightarrow \vec{e}_3$. And so,  under the assumption \eqref{N1}, the function $\varphi$ must be globally defined on $\R$. 

We should point out too, see \cite{HIMW,MM},  that tilted grim reapers are examples of flat  $[\varphi,\vec{e}_{3}]$-minimal surfaces in $\mathbb{R}^{3}$ whose Gauss map takes values  onto a non-compact semicircle on the open hemisphere bounded by a vertical plane through the origin. Moreover,  about the condition \eqref{N2}, it is remarkable,  see \cite{MM}, that the Gauss maps  of $[\varphi,\vec{e}_{3}]$-catenary cylinders and  $[\varphi,\vec{e}_{3}]$-bowls takes their values on an open hemisphere bounded by the horizontal plane through the origin. 

The proofs of Theorems \ref{Main1}, \ref{Main2} and \ref{Main3} are inspired by ideas used in \cite{BS,K} for translating solitons and depend on a blend of ideas from surface theory, elliptic theory and maximum principles.

In the ambient  $\mathbb{L}^{3}$ we prove:
\begin{theorem}
\label{Main4} Consider $\varphi:]a,b[ \rightarrow \mathbb{R}$,  a concave function, that is, $\ddot{\varphi}\leq 0$ on $]a,b[$. Then, there is no complete spacelike $[\varphi,\vec{e}_{3}]$-maximal surfaces $\Sigma$ in $\mathbb{L}^{3}$  satisfying  that 
\begin{equation}\label{loeq}
\text{either}\quad  \inf_\Sigma \dot{\varphi}^2 >0, \quad \text{or}\quad   \sup_\Sigma \ddot{\varphi}<0.
\end{equation}
\end{theorem}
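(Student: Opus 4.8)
The plan is to run a maximum-principle argument against a superharmonic height-type function. Fix the future-directed timelike unit normal $N$ of the spacelike surface $\Sigma$, so that $\langle N,N\rangle=-1$, and introduce the angle function $\nu:=-\langle N,\vec{e}_3\rangle\geq 1$ together with the height $\mu:=\langle\,\cdot\,,\vec{e}_3\rangle$. First I would record the two basic identities that follow from $\Delta\,(\text{position})=\mathbf{H}=\dot{\varphi}\,\vec{e}_3^{\,\perp}$ together with $\vec{e}_3^{\,\perp}=\nu N$, namely
\[
\Delta\mu=-\dot{\varphi}\,\nu^{2},\qquad |\nabla\mu|^{2}=\nu^{2}-1 .
\]
These are the Lorentzian analogues of the identities underlying the translating-soliton arguments of \cite{BS,K}.

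Next I would test the concavity hypothesis on the composed function $w:=\varphi\circ\mu$. A direct computation using the chain rule and the identities above gives
\[
\Delta w=\ddot{\varphi}\,|\nabla\mu|^{2}+\dot{\varphi}\,\Delta\mu=\ddot{\varphi}\,(\nu^{2}-1)-\dot{\varphi}^{\,2}\,\nu^{2}.
\]
Since $\ddot{\varphi}\leq 0$ and $\nu^{2}-1\geq 0$, both summands are nonpositive, so $w$ is superharmonic on $\Sigma$. Moreover, the two alternatives in \eqref{loeq} upgrade this to a quantitative strict sign: if $\inf_\Sigma\dot{\varphi}^{2}>0$ then $\Delta w\leq-(\inf_\Sigma\dot{\varphi}^{2})\,\nu^{2}<0$ (and in this case $\dot{\varphi}$ cannot vanish, hence has a fixed sign, so even $\mu$ itself has a Laplacian of strict and constant sign via $\Delta\mu=-\dot{\varphi}\,\nu^{2}$), while if $\sup_\Sigma\ddot{\varphi}<0$ then $\Delta w\leq(\sup_\Sigma\ddot{\varphi})(\nu^{2}-1)\leq 0$, with strict negativity wherever the Gauss map is not horizontal.

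Finally I would close the argument by a maximum principle at infinity. A strictly superharmonic function cannot attain an interior minimum, so on the complete noncompact $\Sigma$ one wants a sequence $\{p_k\}$ along which $w(p_k)\to\inf_\Sigma w$, $|\nabla w(p_k)|\to 0$ and $\Delta w(p_k)\geq-1/k$, which is incompatible with $\Delta w\leq-c<0$; this is exactly the content of an Omori--Yau/Ekeland-type principle as in \cite{AMR}. The main obstacle is to justify this principle in the present generality, and two points must be settled. First, $w$ must be bounded from below, which is not automatic, because a concave $\varphi$ may tend to $-\infty$ at an endpoint of $]a,b[$ while a priori the height of $\Sigma$ could approach that endpoint. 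Second, the Omori--Yau principle requires control of the intrinsic curvature, whereas no curvature hypothesis is imposed in the statement. I expect the curvature issue to be handled by establishing that $\Sigma$ is parabolic, so that the bounded-below superharmonic $w$ must be constant, forcing $\Delta w\equiv 0$ and contradicting the strict sign; the lower-boundedness of $w$ should then follow from the same Gauss-map control that feeds the parabolicity estimate. Reconciling these two ingredients, i.e.\ producing a genuinely bounded superharmonic test function with a strictly negative Laplacian on a surface whose intrinsic geometry is only indirectly controlled, is the crux of the proof.
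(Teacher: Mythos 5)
Your computation that $w=\varphi\circ\mu$ satisfies $\Delta w=\ddot{\varphi}(\nu^{2}-1)-\dot{\varphi}^{2}\nu^{2}\leq 0$ is correct, but the program you sketch for closing the argument breaks down at exactly the points you yourself flag as ``the crux,'' and the remedies you propose do not exist. First, parabolicity is not available: for a spacelike $[\varphi,\vec{e}_{3}]$-maximal surface the Gauss curvature is $K=-\det A$ with $\mathrm{trace}(\mathcal{S})=\dot{\varphi}\eta$, so $K$ has no sign and no a priori bound, and Theorem \ref{Main4} imposes no hypothesis analogous to the Gauss-map conditions \eqref{N1} of the Euclidean theorems from which you hope to extract ``Gauss-map control''; there is simply nothing in the hypotheses to feed either the parabolicity estimate or the lower bound on $w$ (indeed a concave $\varphi$ with $\inf_\Sigma\dot{\varphi}^{2}>0$ is strictly monotone, and the height can be unbounded, so $w$ is genuinely unbounded below in general). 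Second, even granting a maximum principle at infinity, your strict sign fails in the alternative $\sup_\Sigma\ddot{\varphi}<0$: the bound $\Delta w\leq(\sup_\Sigma\ddot{\varphi})(\nu^{2}-1)$ degenerates wherever $\nu=1$, i.e.\ where $\nabla\mu=0$, and nothing prevents a minimizing sequence for $w$ from concentrating at such points, so no contradiction results. Third, you work with the plain Laplacian, whereas the natural operator here is the drift Laplacian $\Delta^{\varphi}$ (note $\Delta^{\varphi}w=\ddot{\varphi}(\nu^{2}-1)-\dot{\varphi}^{2}\leq-\dot{\varphi}^{2}$, already cleaner), and it is only for $\Delta^{\varphi}$ that the hypotheses supply curvature control.

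This last point is the key idea your proposal misses: the concavity hypothesis $\ddot{\varphi}\leq 0$ \emph{is} the curvature control, but for the Bakry-\'Emery tensor rather than the Ricci tensor. By the Gauss equation, $Ric_{\varphi}(X,X)=\vert AX\vert^{2}-\ddot{\varphi}\langle X,\nabla\mu\rangle^{2}\geq 0$, so the Wei--Wylie comparison theorem gives $\Delta^{\varphi}r\leq C/r$ and the generalized Omori--Yau principle holds for $\Delta^{\varphi}$ on any complete such surface with no extra assumptions (Lemma \ref{ll3}). The paper then replaces your unbounded height-based test function by the automatically bounded angle-based function $\Upsilon=-(1+\eta^{2})^{-1/2}\in[-1/\sqrt{2},0)$, which satisfies the differential inequality \eqref{oylorentz} of Lemma \ref{ll2}. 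Evaluating that inequality along the Omori--Yau sequence for $\Upsilon$, and using $2\vert\mathcal{S}\vert^{2}\geq H^{2}=\dot{\varphi}^{2}\eta^{2}$ together with $\vert\nabla\mu\vert^{2}/\eta^{2}\rightarrow 1-1/\eta_{*}^{2}>0$ (here nonconstancy of $\eta$ forces $\eta_{*}^{2}=\sup_\Sigma\eta^{2}>1$, which is precisely what rules out the degeneration at $\nu=1$ that blocks your second alternative), one obtains $\inf_\Sigma\dot{\varphi}^{2}=0$ and $\sup_\Sigma\ddot{\varphi}=0$ simultaneously, contradicting \eqref{loeq}. In short: to repair your argument you would need to (i) pass from $\Delta$ to $\Delta^{\varphi}$, (ii) justify the maximum principle via $Ric_{\varphi}\geq 0$ rather than parabolicity, and (iii) trade $w$ for a bounded function of the hyperbolic angle -- at which point you have reconstructed the paper's proof.
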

\setcounter{theorem}{0}
\renewcommand{\thetheorem}{\arabic{theorem}}
\numberwithin{theorem}{section}

The paper is organized as follow: In Section \ref{s2} we recall some facts about the weak maximum principle and the stochastic completeness. After, in Section \ref{s3} we consider the Euclidean case and prove, assuming reasonable curvature restrictions,  some Calabi-Bernstein type  and non-existence results concerning complete $[\varphi,\vec{e}_3]$-minimal surfaces in $\R^3$ whose Gauss maps lie on compact subsets of open hemispheres of $\mathbb{S}^2$. Finally, Section \ref{s4} deals with the Lorentzian case where we prove a general non-existence result for complete spacelike $[\varphi,\vec{e}_3]$-maximal surfaces and, in particular, we give a Calabi-Bernstein type result when $\dot{\varphi}$ is bounded.
\section{The weak maximum principle}\label{s2}
Stochastic completeness is the property for a stochastic process to have infinite (intrinsic) life time. An analytic condition to express stochastic completeness, see  \cite[Section 2.3]{AMR},  is the following, 
\begin{definition}{\rm 
 A Riemannian manifold $(\Sigma,\langle\cdot,\cdot\rangle)$ is said to be \textit{stochastically complete} if the \textit{weak maximum principle} hold for the Laplacian operator $\Delta$, that is, if for any function $u\in C^{2}(\Sigma)$ with $u^{*}=\sup_{\Sigma}u<+\infty$, there exists a sequence of points $\{p_{n}\}_{n\in\mathbb{N}}\subset\Sigma$ satisfying
$$u(p_{n})>u^{*}-\frac{1}{n}, \ \ \text{and} \ \ \Delta u(p_{n})<\frac{1}{n}, \text{ for any } n\in\mathbb{N}.$$
}
\end{definition}
Notice that, in the above definition the Riemannian manifold $\Sigma$ is not assumed to be complete (or geodesically complete). From \cite[Theorem 2.8]{AMR}, we have that  the following statements are equivalent. 
\begin{itemize}
\item $\Sigma$ is stochastically complete.
\item For every $\lambda>0$, the only nonnegative bounded $C^{2}$ solution of $\Delta u\geq\lambda u$ on $\Sigma$ is the constant zero.
\item For every $\lambda>0$, the only nonnegative bounded $C^{2}$ solution of $\Delta u=\lambda u$ on $\Sigma$ is the constant zero.
\end{itemize}
In this paper we  shall apply the weak maximum principle for the drift laplacian operator $$\Delta^{f}(\cdot)=\Delta(\cdot)+\langle\nabla f,\nabla (\cdot)\rangle,$$
for some $f\in C^{2}(\Sigma)$, where $\nabla$ denote the gradient operator in $\Sigma$. In this sense, we may give the following definition,
\begin{definition}{\rm 
Let $(\Sigma,\langle\cdot,\cdot\rangle)$ be a Riemannian manifold and consider $f\in C^{2}(\Sigma)$. We say that $\Sigma$ is $f$-\textit{stochastically complete} if the weak maximum principle holds for $\Delta^{f}$, that is,  if for any function $u\in C^{2}(\Sigma)$ with $u^{*}=\sup_{\Sigma}u<+\infty$, there exists a sequence of points $\{p_{n}\}_{n\in\mathbb{N}}\subset\Sigma$ satisfying
$$u(p_{n})>u^{*}-\frac{1}{n}, \ \ \text{and} \ \ \Delta^{f} u(p_{n})<\frac{1}{n}, \text{ for any } n\in\mathbb{N}.$$}
\end{definition}
As above, see \cite[Theorem 2.14]{AMR}, the following characterization holds.
\begin{proposition}
The following statements are equivalent
\begin{itemize}
\item $\Sigma$ is $f$-stochastically complete.
\item For every $\lambda>0$, the only nonnegative bounded $C^{2}$ solution of $\Delta^{f} u\geq\lambda u$ on $\Sigma$ is the constant zero.
\item For every $\lambda>0$, the only nonnegative bounded $C^{2}$ solution of $\Delta^{f} u=\lambda u$ on $\Sigma$ is the constant zero.
\end{itemize}
\end{proposition}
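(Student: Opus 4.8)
The plan is to establish the cyclic chain of implications among the three statements, which I label $(a)$, $(b)$, $(c)$ in the order listed: $f$-stochastic completeness, the Liouville property for the inequality $\Delta^{f}u\geq\lambda u$, and the Liouville property for the equation $\Delta^{f}u=\lambda u$. I would prove $(a)\Rightarrow(b)\Rightarrow(c)\Rightarrow(a)$. The implication $(b)\Rightarrow(c)$ is immediate, since every solution of $\Delta^{f}u=\lambda u$ is in particular a solution of $\Delta^{f}u\geq\lambda u$, so the Liouville conclusion for the inequality forces it for the equation. The implication $(a)\Rightarrow(b)$ is a one-line application of the weak maximum principle: if $u\in C^{2}(\Sigma)$ is nonnegative and bounded with $\Delta^{f}u\geq\lambda u$ and $\lambda>0$, write $u^{*}=\sup_{\Sigma}u<\infty$ and take a sequence $\{p_{n}\}$ with $u(p_{n})>u^{*}-1/n$ and $\Delta^{f}u(p_{n})<1/n$. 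Then
\[
\lambda\,(u^{*}-1/n)<\lambda\, u(p_{n})\leq\Delta^{f}u(p_{n})<1/n,
\]
and letting $n\to\infty$ gives $\lambda u^{*}\leq0$; since $u\geq0$ and $\lambda>0$ this forces $u^{*}=0$, i.e. $u\equiv0$.

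The substance lies in $(c)\Rightarrow(a)$, which I would prove by contraposition. It is worth noting at the outset that $\Delta^{f}$ is the weighted Laplacian of the smooth metric measure space $(\Sigma,\langle\cdot,\cdot\rangle,e^{f}\,d\Sigma)$, so that $f$-stochastic completeness is genuine stochastic completeness for that weighted structure and the argument below is the classical Khas'minskii--Grigor'yan construction transcribed to the drift setting. Assume the weak maximum principle for $\Delta^{f}$ fails. Unwinding the definition, this means there is $u\in C^{2}(\Sigma)$ with $u^{*}<\infty$ and a level $\gamma_{0}<u^{*}$ such that $\delta:=\inf_{\Omega}\Delta^{f}u>0$ on the nonempty upper level set $\Omega=\{u>\gamma_{0}\}$. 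Setting $M=u^{*}-\gamma_{0}>0$ and $\lambda=\delta/M>0$, the function $w:=u-\gamma_{0}$ satisfies $0<w\leq M$ and $\Delta^{f}w=\Delta^{f}u\geq\delta\geq\lambda w$ on $\Omega$. Consequently $w_{+}:=\max(u-\gamma_{0},0)$, being the maximum of the two subsolutions $w$ and $0$, is a bounded, nonnegative, globally defined weak subsolution of $\Delta^{f}w_{+}\geq\lambda w_{+}$ on all of $\Sigma$.

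Finally I would manufacture from $w_{+}$ a genuine nontrivial solution of the equation, thereby contradicting $(c)$. Fix a smooth exhaustion $\Sigma=\bigcup_{k}D_{k}$ by relatively compact domains and solve the linear Dirichlet problems $\Delta^{f}v_{k}=\lambda v_{k}$ in $D_{k}$ with $v_{k}=M$ on $\partial D_{k}$; these are solvable because $\lambda>0$ makes $-\Delta^{f}+\lambda$ coercive. The constant $M$ is a supersolution and $w_{+}$ a subsolution with $w_{+}\leq M$, so comparison gives $w_{+}\leq v_{k}\leq M$ in $D_{k}$, and comparison across nested domains gives the monotonicity $v_{k+1}\leq v_{k}$. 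Interior Schauder estimates then let me pass to the monotone limit $v_{\infty}=\lim_{k}v_{k}$, a bounded $C^{2}$ solution of $\Delta^{f}v_{\infty}=\lambda v_{\infty}$ on $\Sigma$ with $0\leq w_{+}\leq v_{\infty}\leq M$. Since $w_{+}>0$ on $\Omega\neq\emptyset$, we have $v_{\infty}\not\equiv0$, which is the desired contradiction with $(c)$. I expect this last step to be the main obstacle: producing the exact solution through the exhaustion, invoking the comparison principle against the merely \emph{weak} subsolution $w_{+}$, and securing the regularity needed to take the limit. The two remaining implications are, by contrast, essentially bookkeeping.
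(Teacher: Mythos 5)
Your argument is correct in substance, and it is worth noting at the outset that it does genuinely more than the paper: the paper offers no proof of this proposition at all, but simply quotes \cite[Theorem 2.14]{AMR}, observing that $\Delta^{f}$ is the weighted Laplacian of the smooth metric measure space $(\Sigma,\langle\cdot,\cdot\rangle,e^{f}d\Sigma)$ so that the classical characterization of stochastic completeness applies verbatim in the drift setting. Your cyclic scheme $(a)\Rightarrow(b)\Rightarrow(c)\Rightarrow(a)$ reconstructs precisely the proof that underlies the quoted theorem (the Grigor'yan--Khas'minskii exhaustion-and-comparison argument), and the two easy implications and the unwinding of the negated weak maximum principle (existence of $u$ with $\inf_{\Omega}\Delta^{f}u=\delta>0$ on $\Omega=\{u>u^{*}-\varepsilon\}$, $\gamma_{0}=u^{*}-\varepsilon$) are exactly right. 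One step in $(c)\Rightarrow(a)$ is stated too quickly: you call $w_{+}=\max(u-\gamma_{0},0)$ ``the maximum of the two subsolutions $w$ and $0$,'' but $w$ is a subsolution only on $\Omega$, and the open sets $\{u>\gamma_{0}\}$ and the interior of $\{u\leq\gamma_{0}\}$ need not cover $\Sigma$, so the usual gluing lemma does not apply directly at points of $\partial\Omega$. The standard repair is the shift trick: for $\varepsilon>0$ the function $w_{\varepsilon}=\max(u-\gamma_{0}-\varepsilon,0)$ is a weak subsolution of $\Delta^{f}v\geq\lambda v$ on each of the two open sets $\{u>\gamma_{0}\}$ (there it is the maximum of the two subsolutions $u-\gamma_{0}-\varepsilon$ and $0$) and $\{u<\gamma_{0}+\varepsilon\}$ (there it vanishes), and these \emph{do} cover $\Sigma$; letting $\varepsilon\downarrow 0$ gives that $w_{+}$ is a global weak subsolution. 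With that amendment, the remaining ingredients you flag are indeed routine: comparison of the $C^{2}$ solutions $v_{k}$ against the Lipschitz weak subsolution $w_{+}$ is licit because the zero-order coefficient $-\lambda$ is negative (weak comparison for $W^{1,2}$ sub/supersolutions), solvability of the Dirichlet problems follows from coercivity of the weighted form, and interior Schauder estimates justify the monotone limit $v_{\infty}\not\equiv 0$ contradicting $(c)$. So your proposal is a complete, self-contained substitute for the citation, at the cost of rederiving a known result; the paper's route buys brevity, yours buys transparency about exactly which elliptic facts the equivalence rests on.
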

Notice that, it is difficult to check when a Riemannian manifold verifies the $f$-weak maximum principle. To end this part we give a criteria to verifying this property when $u^{*}>0$. From Theorem \cite[Theorem 2.14]{AMR}, we can adapt the arguments of the \cite[Theorem 2.9]{AMR} to prove the following result,
\begin{theorem}
\label{criteria}
Let  $\Sigma$ be a Riemannian manifold and $f\in C^{2}(\Sigma)$. If there exists a function $\gamma\in C^{\infty}(\Sigma)$ such that $\gamma(p)\rightarrow +\infty$ as $p\rightarrow \infty$ and $\Delta^{f}\gamma\leq\, \lambda\gamma$ outside a compact set, for some $\lambda>0$, then the weak maximum principle holds in $\Sigma$ for any function $u\in C^{2}(\Sigma)$ with $0<u^{*}<+\infty$.
\end{theorem}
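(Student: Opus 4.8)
The plan is to argue by contradiction through a Khas'minskii-type barrier built from $\gamma$, adapting the sliding-maximum technique. Suppose the weak maximum principle fails for some $u\in C^{2}(\Sigma)$ with $0<u^{*}<+\infty$. Negating the defining property, there is $\sigma>0$ such that
$$\Delta^{f}u\geq \sigma \quad\text{on}\quad \Omega_\sigma:=\{p\in\Sigma:u(p)>u^{*}-\sigma\}.$$
Since adding a nonnegative constant to $\gamma$ leaves both $\gamma(p)\to+\infty$ and the inequality $\Delta^{f}\gamma\leq\lambda\gamma$ intact (the constant only improves the right-hand side, as $\lambda>0$), I may assume $\gamma\geq 0$. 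A first observation is that $u^{*}$ cannot be attained: at an interior maximum $p_{0}$ one has $\nabla u(p_{0})=0$ and $\Delta u(p_{0})\leq 0$, so that $\Delta^{f}u(p_{0})=\Delta u(p_{0})\leq 0<\sigma$, while $p_{0}\in\Omega_\sigma$, contradicting the displayed inequality.

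The heart of the argument is to slide the family $u-\epsilon\gamma$ for $\epsilon>0$. Because $\gamma$ is an exhaustion function (the sublevel sets $\{\gamma\leq a\}$ are compact, which is the meaning of $\gamma\to+\infty$ on the possibly incomplete $\Sigma$) and $u\leq u^{*}$, every superlevel set of $u-\epsilon\gamma$ is precompact, so $u-\epsilon\gamma$ attains its global maximum at some $q_\epsilon\in\Sigma$. Testing maximality against points where $u$ is close to $u^{*}$ shows $\max_\Sigma(u-\epsilon\gamma)\to u^{*}$ as $\epsilon\to 0^{+}$; hence for $\epsilon$ small $u(q_\epsilon)\geq\max_\Sigma(u-\epsilon\gamma)>u^{*}-\sigma$, i.e. $q_\epsilon\in\Omega_\sigma$. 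Moreover $q_\epsilon$ must eventually leave the compact set $K$ outside which $\Delta^{f}\gamma\leq\lambda\gamma$ holds: otherwise a subsequence $q_{\epsilon_k}$ would converge in $K$, and since $\epsilon_k\gamma(q_{\epsilon_k})\to 0$ on the compact $K$ the limit point would have $u$-value equal to $u^{*}$, contradicting that the maximum is not attained.

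With $q_\epsilon\in\Omega_\sigma\setminus K$ for small $\epsilon$, I combine the two differential inequalities at $q_\epsilon$. The first-order condition $\nabla u(q_\epsilon)=\epsilon\,\nabla\gamma(q_\epsilon)$ makes the drift term of $\Delta^{f}(u-\epsilon\gamma)$ vanish there, and the Hessian test gives $\Delta^{f}(u-\epsilon\gamma)(q_\epsilon)=\Delta(u-\epsilon\gamma)(q_\epsilon)\leq 0$, so that
$$\sigma\leq \Delta^{f}u(q_\epsilon)\leq \epsilon\,\Delta^{f}\gamma(q_\epsilon)\leq \epsilon\lambda\,\gamma(q_\epsilon),$$
whence $\epsilon\gamma(q_\epsilon)\geq\sigma/\lambda$. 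On the other hand, maximality of $u-\epsilon\gamma$ against a fixed near-maximizer $y$ with $u^{*}-u(y)<\sigma/(2\lambda)$ yields $\epsilon\gamma(q_\epsilon)\leq u(q_\epsilon)-u(y)+\epsilon\gamma(y)\leq (u^{*}-u(y))+\epsilon\gamma(y)$, which is strictly below $\sigma/\lambda$ once $\epsilon$ is small enough that $\epsilon\gamma(y)<\sigma/(2\lambda)$. These two estimates are incompatible, giving the contradiction. I expect the main obstacle to be exactly this simultaneous localization of the sliding maximum $q_\epsilon$, forcing it into $\Omega_\sigma$ while pushing it out of $K$, together with the correct order of quantifiers (fix $y$ first, then let $\epsilon\to 0$); the remaining estimates are then routine, and the hypothesis $0<u^{*}$ enters through the normalization of scales, consistent with the framework of \cite{AMR}.
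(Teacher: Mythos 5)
Your proof is correct, and it fills in what the paper only cites: the paper offers no written argument for Theorem \ref{criteria}, saying instead that one can adapt the arguments of \cite[Theorem 2.9]{AMR} via the characterization in \cite[Theorem 2.14]{AMR}, i.e.\ reduce to showing that every bounded nonnegative solution of $\Delta^{f}v\geq\lambda v$ vanishes and run the Khas'minskii barrier on $v-\epsilon\gamma$. You run the same sliding-barrier mechanism, but directly on $u$: negating the weak maximum principle with the correct quantifiers yields $\sigma>0$ with $\Delta^{f}u\geq\sigma$ on $\Omega_{\sigma}=\{u>u^{*}-\sigma\}$, and every step of your localization is sound --- $\gamma$ is continuous and tends to $+\infty$, hence bounded below, so it may be taken nonnegative; the sublevel sets of $\gamma$ are compact, so $u-\epsilon\gamma$ attains its maximum at some $q_{\epsilon}$; the convergence $\max_{\Sigma}(u-\epsilon\gamma)\to u^{*}$ forces $q_{\epsilon}\in\Omega_{\sigma}$; the subsequence argument (resting on your correct preliminary observation that $u^{*}$ cannot be attained, the drift term vanishing at a critical point) pushes $q_{\epsilon}$ out of the exceptional compact set; and the two bounds $\epsilon\gamma(q_{\epsilon})\geq\sigma/\lambda$ and $\epsilon\gamma(q_{\epsilon})\leq(u^{*}-u(y))+\epsilon\gamma(y)<\sigma/\lambda$, with $y$ fixed before letting $\epsilon\to0^{+}$, clash as claimed. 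Your direct route buys two things over the suggested reduction: it is self-contained (no appeal to the equivalence between $f$-stochastic completeness and the vanishing of bounded nonnegative solutions of $\Delta^{f}v\geq\lambda v$), and it never uses the hypothesis $u^{*}>0$, only $u^{*}<+\infty$, so it in fact establishes the weak maximum principle for all $u$ with finite supremum, i.e.\ full $f$-stochastic completeness --- consistent with the observation that the weak maximum principle is invariant under $u\mapsto u+c$. For that reason your closing sentence, asserting that $0<u^{*}$ ``enters through the normalization of scales,'' is inaccurate as a description of your own argument: the positivity hypothesis plays no role in it, and you should either delete that claim or replace it with the remark that this hypothesis is superfluous here and is merely inherited from the statement in \cite{AMR}.
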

In particular, if the Omori-Yau maximum principle holds in $\Sigma$ for $\Delta^{f}$, see  \cite[Theorem 3.2]{AMR}, then $\Sigma$ is $f$-stochastically complete.
\section{The Euclidean case}\label{s3}
Let $\Sigma$ be the $[\varphi,\vec{e}_{3}]$-minimal immersion in $\mathbb{R}^{3}$  whose Gauss  $N:\Sigma\rightarrow\mathbb{S}^{2}$ verifies \eqref{N1}. We shall denote by  $\mathcal{S}$  the scalar second fundamental form given by $\mathcal{S}(u,v)=-\langle \textbf{S} u,v\rangle$ for any vector fields $u,v\in T\Sigma$, where $\textbf{S}$ is shape operator. 
\

Consider the height function $\mu:\Sigma\rightarrow\mathbb{R}$,  $\mu(p)=\langle p,\vec{e}_{3}\rangle$ and the angle function $\eta:\Sigma\rightarrow\mathbb{R}$,  $\eta(p)=\langle N(p),\vec{e}_{3}\rangle$. Then, from the Simons-type formula, we have, see \cite[Theorem 3]{CMZ}),
\begin{equation}
\label{b3}
\Delta^{\varphi}\vert\mathcal{S}\vert^{2}=2\vert\nabla\mathcal{S}\vert^{2}-2\vert\mathcal{S}\vert^{4}+2\ddot{\varphi}\eta^{2}\vert\mathcal{S}\vert^{2}-4\ddot{\varphi}\vert\textbf{S}\nabla\mu\vert^{2}-2\dddot{\varphi}\eta\langle\textbf{S}\nabla\mu,\nabla\mu\rangle.
\end{equation}
From \eqref{N1}, there exists  a constant $b>1$ such that the  function  $\phi:\Sigma\rightarrow\mathbb{R}$ given  by $\phi(p)=1-\langle N(p),y_{0}\rangle$ satisfies that $\phi-b\geq\varepsilon$. Moreover, by a straightforward computation, we have that $\nabla\phi=\textbf{S}y_{0}^{\top}$, where $(\ .\ )^{\top}$ denotes the projection on the tangent bundle, and
\begin{align} 
\label{c1}
\Delta^{\varphi}\phi=(\vert\mathcal{S}\vert^{2}-\ddot{\varphi}\eta^{2})(1-\phi).
\end{align}
Let $\psi:\Sigma\rightarrow\mathbb{R}$ be the function given by 
$$\psi=\frac{\vert\mathcal{S}\vert^{2}}{(b-\phi)^{2}}.$$
From equations \eqref{b3} and \eqref{c1}, we obtain that
\begin{align}
\label{c2}
\Delta^{\varphi}\psi&=\frac{2\vert\nabla\mathcal{S}\vert^{2}}{(b-\phi)^{2}}-\frac{2\vert\mathcal{S}\vert^{4}}{(b-\phi)^{2}}+\frac{2\ddot{\varphi}\eta^2\vert\mathcal{S}\vert^{2}}{(b-\phi)^{2}}-\frac{4\ddot{\varphi}\vert\textbf{S}\nabla\mu\vert^{2}}{(b-\phi)^2}-\frac{2\dddot{\varphi}\eta\langle\textbf{S}\nabla\mu,\nabla\mu\rangle}{(b-\phi)^2} \\
&+\frac{2\vert\mathcal{S}\vert^{4}(1-\phi)}{(b-\phi)^{3}}-\frac{2\ddot{\varphi}\eta^2\vert\mathcal{S}\vert^2(1-\phi)}{(b-\phi)^3}+\frac{6\vert\mathcal{S}\vert^{2}\vert\nabla\phi\vert^{2}}{(b-\phi)^4}+\frac{4\langle\nabla\vert\mathcal{S}\vert^{2},\nabla\phi\rangle}{(b-\phi)^3}\nonumber.
\end{align}
But,
\begin{align}
&\frac{\langle\nabla\phi,\nabla\psi\rangle}{b-\phi}=\frac{\langle\nabla\phi,\nabla\vert\mathcal{S}\vert^{2}\rangle}{(b-\phi)^{3}}+\frac{2\vert\mathcal{S}\vert^{2}\vert\nabla\phi\vert^{2}}{(b-\phi)^{4}}, \label{c3}\\
&\frac{2\vert\nabla \mathcal{S}\vert^{2}}{(b-\phi)^{2}}+\frac{2\vert\nabla\phi\vert^{2}\vert\mathcal{S}\vert^{2}}{(b-\phi)^{4}}\geq\frac{4\vert\nabla\phi\vert \vert\mathcal{S}\vert \vert \nabla \mathcal{S}\vert}{(b-\phi)^{3}}. \label{c4} ,
\end{align}
and so, from \eqref{c2},\eqref{c3} and \eqref{c4}, we have the following inequality
\begin{align}
\label{c5}
\Delta^{\varphi}\psi\geq& \frac{2(1-\phi)\vert\mathcal{S}\vert^{4}}{(b-\phi)^{3}}-\frac{2\vert\mathcal{S}\vert^{4}}{(b-\phi)^{2}}+\frac{2\langle\nabla\phi,\nabla\psi\rangle}{b-\phi}+ \frac{2\ddot{\varphi}\eta^2\vert\mathcal{S}\vert^{2}}{(b-\phi)^2}\\
&-\frac{2\ddot{\varphi}\vert\mathcal{S}\vert^{2}\eta^{2}(1-\phi)}{(b-\phi)^{3}}-\frac{4\ddot{\varphi}\vert\textbf{S}\nabla\mu\vert^{2}}{(b-\phi)^{2}}-\frac{2\dddot{\varphi}\eta\langle\textbf{S}\nabla\mu,\nabla\mu\rangle}{(b-\phi)^{2}}.\nonumber
\end{align}
\textbf{Claim 1:} The following statements hold on $\Sigma$
\begin{enumerate}
\item [(i)] $\ddot{\varphi}\eta^2\vert\mathcal{S}\vert^{2}(b-\phi)-\ddot{\varphi}\vert\mathcal{S}\vert^{2}\eta^{2}(1-\phi)-2\ddot{\varphi}\vert\mathcal{S}\nabla\mu\vert^{2}(b-\phi)\leq 0,$
\item [(ii)] $\dddot{\varphi}\eta\langle\textbf{S}\nabla\mu,\nabla\mu\rangle\leq 0$.
\end{enumerate}
\begin{proof}[Proof of  Claim 1.]

Fix any point $p\in\Sigma$. The first item (i) trivially holds since $-\ddot{\varphi}\vert\textbf{S}\nabla\mu\vert^{2}\geq 0$ and $b-\phi\geq 1-\phi$. On the other hand, by taking  an orthornormal frame of principal directions $\{v_{i}\}_{i=1,2}$ of $T_{p}\Sigma$,  $$\langle\textbf{S}\nabla\mu,\nabla\mu\rangle=\sum_{i=1}^{2}\langle\nabla\mu,v_{i}\rangle^{2}k_{i},$$
where $k_{i}$ are the principal curvatures of $\Sigma$ in $p$. 

If $\dot{\varphi}\geq 0$ everywhere (the case $\dot{\varphi}\leq 0$ is analogous), then as $K\geq 0$, we get that either $H\leq 0$ or $H\geq 0$ on $\Sigma$.
\begin{itemize}
\item If $H\leq 0$, then each $k_{i}\leq 0$ on $\Sigma$. In particular, from the equation \eqref{meancurvature}, the angle function $\eta\geq 0$. Consequently, from the condition $(\dagger)$, the following inequality holds
\begin{equation}
\label{ineqq}
0\geq \dddot{\varphi}\eta\langle\textbf{S}\nabla\mu,\nabla\mu\rangle=\sum_{i=1}^{2}\dddot{\varphi}\eta\, k_{i}\langle\nabla\mu,v_{i}\rangle^{2}.
\end{equation}
\item If $H\geq 0$, then each $k_{i}\geq 0$ on $\Sigma$. In this case, the angle function $\eta\leq 0$ and then, the inequality \eqref{ineqq} also holds.
\end{itemize}
\end{proof}

Consequently, from the inequality \eqref{c5} together with Claim 1, if we denote by ${\cal J}$ to the drift Laplacian operator   $\Delta^{\varphi+2\text{log}(\phi-b)}$, then
\begin{equation}
\label{c6}
{\cal J} \psi \geq 2(b- \phi)(1-b)\, \psi^{2}\geq 2\varepsilon (b-1)\psi^{2}.
\end{equation}
Let  $\rho(p)=|p|$  be the distance function in $\R^3$ from the point $p\in \Sigma$ to the origin and $\lambda = (R^2-\rho^2)^2 \psi$ where $R$ is any positive real number. It is easy to check that 
\begin{align}
 \rho \nabla \rho &= p^\top,\label{grho}\\
{\cal J} \rho^2 &= 2(2 +\dot{\varphi} \mu - \frac{2}{b-\phi} {\cal S}(y_0^\top,p^\top),\label{lrho}\\
{\cal J}  \lambda&=(R^2-\rho^2)^2 {\cal J}\psi - 2 \psi (R^2-\rho^2) {\cal J}\rho^2  \label{llambda} \\ 
&+ 8 |p^\top|^2 \psi - 8 (R^2-\rho^2) \langle \nabla \psi , p^\top\rangle.\nonumber
\end{align}

\subsection{ Proof of Theorem \ref{Main1}}
By using that $H$ is bounded on $\Sigma$ and $K\geq 0$,  we have that the length of the second fundamental form $|{\cal S}|^2 = H^2 - K \leq H^2 $  is uniformly bounded on $\Sigma$ and, from \eqref{c6}, \eqref{lrho} and \eqref{llambda}, there exist positive constants $C_1$ and $C_2$ such that
\begin{align}
{\cal J} \lambda &\geq 2(R^2-\rho^2)^2 \varepsilon \, (b-1)\psi^2 - C_1\psi (R^2-\rho^2) (C_2 + \rho)\label{dlambda}\\
&+ 8 |p^\top|^2 \psi - 8 (R^2-\rho^2) \langle \nabla \psi, p^\top\rangle.\nonumber
\end{align}
Let $B_R$ be the ball in $\R^3$ of radius $R$ with center at the origin and consider $R$ large enough so that $\Sigma_R= \Sigma\cap B_R\neq \emptyset$. Then, on $\Sigma_R$ the function $\lambda$ attains its maximum at a interior point $p_R$, where we have that
$$ \nabla \lambda (p_R) =0, \qquad {\cal J} \lambda (p_R) \geq 0.$$ 
Thus, from the above expression, \eqref{grho}, \eqref{dlambda} and by a straightforward computation, we get that,
\begin{equation}\sup_{\Sigma_R} \lambda\leq \widetilde{C}_1R^2 (\widetilde{C}_2 + R),
\end{equation}
for some positive constants $\widetilde{C}_1$ and $\widetilde{C}_2$.
In particular on $\Sigma_{R/2}$ the function $\psi$ satisfies
$$ \psi \leq \frac{16}{9}\widetilde{C}_1 \frac{\widetilde{C}_2 + R}{R^2},$$
that is, $\psi\rightarrow 0$ as $R\rightarrow \infty$ and then, $\Sigma$ must be a plane.
\begin{proof}[]
\end{proof}
\subsection{Proof of Theorem \ref{Main2}}
Let $\Sigma$ be a properly embedded $[\varphi,\vec{e}_{3}]$-minimal surface in $\mathbb{R}^{3}$. Up to horizontal translations, we can assume $0$ is not contained in $\Sigma$. Consider the following smooth function $\gamma:\Sigma\rightarrow\mathbb{R}$ given by $\gamma(p)=2\log(\vert p\vert)$. As $\Sigma$ is properly embedded, we have that $\vert\gamma\vert\rightarrow+\infty$ when $p\rightarrow+\infty$.  It is not difficult to check that
\begin{align*}
&\Delta\gamma=-4\frac{\vert p^{T}\vert}{\vert p\vert^{2}}+\frac{2}{\vert p\vert^{2}}\left(2+\langle p,N\rangle H \right), \\
&\langle\nabla\gamma,\nabla(\varphi+2\log(\phi-b))\rangle=\frac{1}{\vert p\vert^{2}}\left(2\dot{\varphi}\langle p^{T},\vec{e}_{3}^{T}\rangle+4\frac{\mathcal{S}(p^{T},y_{0}^{T})}{\phi-b}\right). 
\end{align*}
Now, from the above expressions, \eqref{hy3} and  by using that $H$ is bounded and $K\geq 0$, we have  that  $\vert\mathcal{S}\vert^{2}$ and $\psi$ are  uniformly bounded and  there must be a constant $C>0$ such that  $${\cal J} \gamma=\Delta^{\varphi+2\log(\phi-b)}\gamma\leq C\gamma,$$ outside a compact set. Thus, from the Proposition \ref{criteria}, $\Sigma$ is $(\varphi+2\log(\phi-b))$-stochastics complete  and there exists a sequence of points $\{p_{n}\}_{n\in\mathbb{N}}\subset\Sigma$ satisfying
$$\psi(p_{n})\geq\psi^{*}-\frac{1}{n} \ \ \text{and} \ \ {\cal J}\psi(p_{n})\leq \frac{1}{n} \ \ \text{ for any }n\in\mathbb{N}, $$
where $\psi^{*} = \sup_\Sigma \psi$.   However, by taking limits in \eqref{c6}, we obtain that $$0\geq 2\varepsilon (b-1)(\psi^{*})^{2}\geq 0,$$ 
which proves that $\psi^{*}=0$ and $\Sigma$ must be a plane.
\begin{proof}[]\end{proof}
\subsection{Proof of Theorem \ref{Main3}}
Let $\eta =\langle N,\vec{e}_3\rangle$ be the angle function of $\Sigma$,  then by using  \cite[Lemma 2.1]{MM}, we get
\begin{equation}
\Delta^\varphi \eta = - (\ddot{\varphi} \vert \nabla \mu \vert^2 + \vert A\vert^2)\eta \leq -(\ddot{\varphi} \vert \nabla \mu \vert^2 + \frac{\dot{\varphi}^2 \eta^2}{2})\eta.\label{thc}
\end{equation}
Consider the function $\gamma:\Sigma\longrightarrow \R$ given by $\gamma(p) = 2 \log \vert p\vert$, then as $\Sigma$ is properly embedded and $\varphi$ satisfies \eqref{hy4},  we can check that
\begin{align}
&\gamma(p) \rightarrow +\infty, \quad \text{  $p\rightarrow \infty$} \label{comp1}\\
&\vert \nabla \gamma(p)\vert = 2\frac{\vert p^\top\vert^2}{\vert p\vert^2} \leq 2,\quad p \rightarrow \infty\label{comp2}\\
& \Delta^\varphi \gamma(p) = -4\frac{\vert p^\top\vert^2}{\vert p\vert^4} + \frac{2 \mu(p) \dot{\varphi}(\mu(p))+4}{\vert p\vert^2} \leq C, \quad p\rightarrow \infty.\label{comp3}
\end{align}
Thus,  from \cite[Theorem 3.2]{AMR} we can apply the generalized Omori-Yau maximum principle to $\Delta^\varphi$ and there exists a sequence of points $\{p_n\}$ in $\Sigma$ such that 
\begin{align*}
&\eta(p_n) \rightarrow \eta_\star =\inf_\Sigma\eta >0\\
&\Delta^\varphi\eta(p_n) > \frac{1}{n}, \quad \nabla\eta (p_n) \rightarrow 0,
\end{align*}
and taking limits in \eqref{thc} along the points $p_n$, we get a contradiction.
\begin{proof}[]\end{proof}
\section{The Lorentzian case}\label{s4}
Let $\mathbb{L}^3$ be  the 3-dimensional Lorentz-Minkowski space, that is, the real vector space $\mathbb{R}^3$ endowed with the Lorentzian metric tensor 
$$\langle \cdot, \cdot\rangle_{\mathbb{L}^3} = dx^2+ dy^2- dz^2$$
where $(x,y,z)$ are the canonical coordinates of $\R^3$. A surface $\Sigma$ in $\L^3$  is said to be a spacelike surface if the induced metric  is a Riemannian metric on $\Sigma$, which, as usual, is also denoted by $\langle \cdot, \cdot\rangle_{\mathbb{L}^3} $. It is well-known that such a surface is orientable, namely, we can choose
a unit timelike normal vector field $N$ globally defined on $\Sigma$  in the same time-orientation of $\vec{e}_3$ that we will call the Gauss map of the immersion.
\\
Denote by $\overline{D}$ and $D$, the Levi-Civita connections of $\mathbb{L}^{3}$ and $\Sigma$, respectively. Then the Gauss and Weingarten formulas for $\Sigma$  in $\L^3$ are given by
\begin{align}
& \overline{D}_XY = D_X Y + {\cal S}(X,Y)N =D_X Y - \langle A(X), Y \rangle N \\
&A(X) = - dN(X) = -  \overline{D}_XN
\end{align} 
 for any $X,Y\in T\Sigma$. We will consider $H=\text{trace}(\mathcal{S})$ and $K = -\det(A)$ the mean and Gaussian curvatures of $\Sigma$. 

\

Let $\varphi:]a,b[\rightarrow\mathbb{R}$ be a smooth function. Recall that $\Sigma$ is $[\varphi,\vec{e}_{3}]$-\textit{maximal} in $\mathfrak{D}^3=\R^2\times]a,b[\subseteq\mathbb{L}^{3}$ if and only if the mean curvature of $\Sigma$ satisfies 
\begin{equation}
\label{a1L3}
H= \dot{\varphi} \  \langle N,\vec{e}_3\rangle_{\L^3},
\end{equation}
which is equivalent to say that $\Sigma$ is a critical point of the weighted area functional \eqref{area}. 

Consider the height function $\mu=-\langle\psi,\vec{e}_{3}\rangle_{\mathbb{L}^{3}}$  and the hyperbolic angle function $\eta=\langle N,\vec{e}_{3}\rangle_{\mathbb{L}^{3}}$  of a spacelike $[\varphi,\vec{e}_{3}]$-maximal surface in $\L^3$,  then arguing as in \cite[Lemma 2.1]{MM} we have,
\begin{lemma}
\label{equations} The following statements hold
\begin{enumerate}
\item $\nabla\mu=-\vec{e}_{3}^{\top}$ , $\vert\nabla\mu\vert_{\L^3}^{2}=\eta^{2}-1$,
\item $\langle\nabla\eta,\cdot\rangle_{\L^3}=-\mathcal{S}(\nabla\mu,\cdot)$,
\item $\nabla^{2}\mu(\cdot,\cdot)=-\eta\mathcal{S}(\cdot,\cdot)$,
\item $\nabla^{2}\eta(\cdot,\cdot)=-(D_{\nabla\mu}\mathcal{S})(\cdot,\cdot)+\eta\mathcal{S}^{[2]}(\cdot,\cdot),$
\item $\Delta^{\varphi}\eta=\eta\left(\vert\mathcal{S}\vert_{\L^3}^{2}-\ddot{\varphi}\vert\nabla\mu\vert_{\L^3}^{2} \right)$,
\end{enumerate}
where $^\top$, $\nabla$ and $\nabla^2$ denote the projection on the tangent bundle, the gradient and hessian operators, respectively,   in $\Sigma$ and $\mathcal{S}^{[2]}$ is the  symmetric $2$-tensor given by
$$\mathcal{S}^{[2]}(X,Y)=\sum_{i=1}^{2}\mathcal{S}(X,v_{i})\mathcal{S}(Y,v_{i})$$
with $\{v_{i}\}_{i=1,2}$ an orthonormal frame of $T\Sigma$.
\end{lemma}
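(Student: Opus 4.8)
The plan is to establish all five identities by direct local computation in the flat ambient $\L^3$, using only the Gauss--Weingarten formulas together with the facts that for the immersion (position vector) $\psi$ one has $\overline{D}_X\psi=X$ and $\overline{D}_X\vec{e}_3=0$ for every tangent field $X$. Throughout, the one essential difference from the Euclidean model of \cite[Lemma 2.1]{MM} is that the Gauss map is timelike, $\langle N,N\rangle_{\L^3}=-1$, so every orthogonal decomposition and every use of self-adjointness of the shape operator picks up a sign that must be tracked carefully. First I would prove (1): differentiating $\mu=-\langle\psi,\vec{e}_3\rangle_{\L^3}$ gives $X(\mu)=-\langle X,\vec{e}_3\rangle_{\L^3}$, whence $\nabla\mu=-\vec{e}_3^{\top}$. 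Decomposing $\vec{e}_3=\vec{e}_3^{\top}+\vec{e}_3^{\perp}$ and pairing with $N$ yields $\vec{e}_3^{\perp}=-\eta N$ (the sign coming from $\langle N,N\rangle_{\L^3}=-1$); then evaluating $\langle\vec{e}_3,\vec{e}_3\rangle_{\L^3}=-1$ through this decomposition gives $|\nabla\mu|_{\L^3}^2=|\vec{e}_3^{\top}|_{\L^3}^2=\eta^2-1$.

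For (2) I would differentiate $\eta=\langle N,\vec{e}_3\rangle_{\L^3}$ to get $X(\eta)=\langle\overline{D}_XN,\vec{e}_3\rangle_{\L^3}=-\langle A(X),\vec{e}_3^{\top}\rangle_{\L^3}$, and then substitute $\vec{e}_3^{\top}=-\nabla\mu$ together with $\mathcal{S}(X,Y)=-\langle A(X),Y\rangle_{\L^3}$ and self-adjointness of $A$ to obtain $\langle\nabla\eta,X\rangle_{\L^3}=-\mathcal{S}(\nabla\mu,X)$, equivalently $\nabla\eta=A(\nabla\mu)$. For (3) I would write $\nabla\mu=-\vec{e}_3-\eta N$ from (1), differentiate in the ambient using $\overline{D}_X\vec{e}_3=0$ and $\overline{D}_XN=-A(X)$ to get $\overline{D}_X\nabla\mu=-X(\eta)N+\eta A(X)$, and read off the tangential part $D_X\nabla\mu=\eta A(X)$, so that $\nabla^2\mu(X,Y)=\langle D_X\nabla\mu,Y\rangle_{\L^3}=-\eta\,\mathcal{S}(X,Y)$.

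The main step, and the one I expect to be most delicate, is (4). Starting from $\langle\nabla\eta,Y\rangle_{\L^3}=-\mathcal{S}(\nabla\mu,Y)$, I would expand $\nabla^2\eta(X,Y)=X\langle\nabla\eta,Y\rangle_{\L^3}-\langle\nabla\eta,D_XY\rangle_{\L^3}$ and apply the product rule for the covariant derivative of the tensor $\mathcal{S}$; the two $\mathcal{S}(\nabla\mu,D_XY)$ terms cancel, leaving $\nabla^2\eta(X,Y)=-(D_X\mathcal{S})(\nabla\mu,Y)-\mathcal{S}(D_X\nabla\mu,Y)$. Here I would invoke the Codazzi equation, which in the flat ambient reads $(D_X\mathcal{S})(\nabla\mu,Y)=(D_{\nabla\mu}\mathcal{S})(X,Y)$, and substitute $D_X\nabla\mu=\eta A(X)$ from (3). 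The key bookkeeping identity is $\mathcal{S}(A(X),Y)=-\langle A^2(X),Y\rangle_{\L^3}=-\mathcal{S}^{[2]}(X,Y)$, where I use $\mathcal{S}^{[2]}(X,Y)=\sum_i\mathcal{S}(X,v_i)\mathcal{S}(Y,v_i)=\langle A^2(X),Y\rangle_{\L^3}$ in an orthonormal frame of the (Riemannian) induced metric; this converts the last term into $+\eta\,\mathcal{S}^{[2]}(X,Y)$ and yields (4). Getting every sign right in this computation, against the timelike convention, is precisely where the care is needed, and is the main obstacle.

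Finally, (5) follows by tracing (4) over an orthonormal frame $\{v_i\}$. Since the trace commutes with covariant differentiation, $\sum_i(D_{\nabla\mu}\mathcal{S})(v_i,v_i)=\nabla\mu(H)=\langle\nabla H,\nabla\mu\rangle_{\L^3}$, while $\sum_i\mathcal{S}^{[2]}(v_i,v_i)=|\mathcal{S}|_{\L^3}^2$, so $\Delta\eta=-\langle\nabla H,\nabla\mu\rangle_{\L^3}+\eta\,|\mathcal{S}|_{\L^3}^2$. I would then use that on $\Sigma$ the weight is $\varphi=\varphi(\mu)$, because $\mu$ is exactly the third coordinate, so that $\nabla\varphi=\dot{\varphi}\,\nabla\mu$, and the maximal equation \eqref{a1L3}, $H=\dot{\varphi}\,\eta$, to compute $\nabla H=\eta\,\ddot{\varphi}\,\nabla\mu+\dot{\varphi}\,\nabla\eta$. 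Combining these with (2) in the form $\langle\nabla\eta,\nabla\mu\rangle_{\L^3}=-\mathcal{S}(\nabla\mu,\nabla\mu)$, the two terms involving $\dot{\varphi}\,\mathcal{S}(\nabla\mu,\nabla\mu)$ cancel in $\Delta^{\varphi}\eta=\Delta\eta+\dot{\varphi}\,\langle\nabla\mu,\nabla\eta\rangle_{\L^3}$, leaving exactly $\Delta^{\varphi}\eta=\eta\bigl(|\mathcal{S}|_{\L^3}^2-\ddot{\varphi}\,|\nabla\mu|_{\L^3}^2\bigr)$, as claimed.
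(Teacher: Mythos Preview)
Your proposal is correct and follows exactly the route the paper indicates: the paper gives no detailed proof here, simply writing ``arguing as in \cite[Lemma 2.1]{MM} we have,'' and your computation is precisely that argument carried out in the Lorentzian setting, with the sign changes coming from $\langle N,N\rangle_{\L^3}=-1$ tracked correctly throughout. Each of your five steps checks out, including the delicate bookkeeping in (4) via Codazzi and the identity $\mathcal{S}(A(X),Y)=-\mathcal{S}^{[2]}(X,Y)$, and the cancellation of the $\dot{\varphi}\,\mathcal{S}(\nabla\mu,\nabla\mu)$ terms in (5).
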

\begin{lemma}\label{ll2}
Let $\Sigma$ be a spacelike $[\varphi,\vec{e}_{3}]$-maximal surface in $\mathbb{L}^{3}$ and consider the smooth function $\Upsilon:\Sigma\rightarrow\mathbb{R}$ given by $\Upsilon=-(1+\eta^{2})^{-1/2}$. Then, the following inequality hold
\begin{equation}-\Upsilon\Delta^{\varphi}\Upsilon+3\vert\nabla\Upsilon\vert^{2}\geq (\vert\mathcal{S}\vert^{2}-\ddot{\varphi}\vert\nabla\mu\vert^{2})\frac{1-\Upsilon^{2}}{1+\eta^{2}}\geq 0.\label{oylorentz}\end{equation}
\end{lemma}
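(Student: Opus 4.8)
The plan is to treat $\Upsilon$ as the composition of the hyperbolic angle function $\eta$ with the one-variable function $f(t)=-(1+t^{2})^{-1/2}$, and to exploit the chain rule for the drift Laplacian. Since $\Delta^{\varphi}u=\Delta u+\langle\nabla\varphi,\nabla u\rangle$ is a second-order diffusion operator, for any $g=f(\eta)$ one has $\nabla g=f'(\eta)\nabla\eta$ and $\Delta^{\varphi}g=f'(\eta)\Delta^{\varphi}\eta+f''(\eta)\vert\nabla\eta\vert^{2}$. First I would record the two derivatives $f'(t)=t(1+t^{2})^{-3/2}$ and $f''(t)=(1-2t^{2})(1+t^{2})^{-5/2}$, and then feed in the master identity of Lemma \ref{equations}(5), namely $\Delta^{\varphi}\eta=\eta(\vert\mathcal{S}\vert^{2}-\ddot{\varphi}\vert\nabla\mu\vert^{2})$.

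Writing $w:=1+\eta^{2}$, so that $\Upsilon=-w^{-1/2}$, $-\Upsilon=w^{-1/2}$ and $\Upsilon^{2}=w^{-1}$, the chain rule yields
\begin{align*}
\Delta^{\varphi}\Upsilon &= \eta^{2}w^{-3/2}(\vert\mathcal{S}\vert^{2}-\ddot{\varphi}\vert\nabla\mu\vert^{2})+(1-2\eta^{2})w^{-5/2}\vert\nabla\eta\vert^{2},\\
\vert\nabla\Upsilon\vert^{2} &= \eta^{2}w^{-3}\vert\nabla\eta\vert^{2}.
\end{align*}
Multiplying the first line by $-\Upsilon=w^{-1/2}$ and adding $3\vert\nabla\Upsilon\vert^{2}$, the two $\vert\nabla\eta\vert^{2}$ contributions combine through the elementary identity $(1-2\eta^{2})+3\eta^{2}=1+\eta^{2}=w$, which collapses the weight $w^{-3}$ to $w^{-2}$. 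This is the single place where the precise coefficient $3$ in the statement is essential: it is exactly the value that turns the curvature-free remainder into $w^{-2}\vert\nabla\eta\vert^{2}$, which is manifestly nonnegative. One thus lands on the clean identity
$$-\Upsilon\Delta^{\varphi}\Upsilon+3\vert\nabla\Upsilon\vert^{2}=\eta^{2}w^{-2}(\vert\mathcal{S}\vert^{2}-\ddot{\varphi}\vert\nabla\mu\vert^{2})+w^{-2}\vert\nabla\eta\vert^{2}.$$

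To match the right-hand side of the claim I would simplify $1-\Upsilon^{2}=1-w^{-1}=\eta^{2}/w$, so that $(1-\Upsilon^{2})/(1+\eta^{2})=\eta^{2}/w^{2}=\eta^{2}w^{-2}$; hence the first term above is precisely $(\vert\mathcal{S}\vert^{2}-\ddot{\varphi}\vert\nabla\mu\vert^{2})(1-\Upsilon^{2})/(1+\eta^{2})$, and the first inequality follows at once from $w^{-2}\vert\nabla\eta\vert^{2}\geq 0$. For the final inequality $\geq 0$ I would invoke the standing concavity hypothesis $\ddot{\varphi}\leq 0$ of this section together with $\vert\nabla\mu\vert^{2}=\eta^{2}-1\geq 0$ (from Lemma \ref{equations}(1) and the reverse Cauchy--Schwarz bound $\eta^{2}\geq 1$ for the timelike unit normal), which forces $\vert\mathcal{S}\vert^{2}-\ddot{\varphi}\vert\nabla\mu\vert^{2}\geq 0$, while $\eta^{2}w^{-2}\geq 0$ is trivial.

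Honestly, the whole argument is a bookkeeping computation rather than a conceptual hurdle; the only genuinely delicate point is keeping the powers of $w=1+\eta^{2}$ straight through the chain rule and spotting the cancellation $(1-2\eta^{2})+3\eta^{2}=w$ that both removes the gradient term's excess weight and leaves a sign-definite remainder. I expect no real obstacle beyond that, and I would only double-check the range $-1/\sqrt{2}\leq\Upsilon<0$ (equivalently $\eta\leq-1$) to confirm that $f$ and its derivatives are evaluated where $\Upsilon$ is smooth and the stated inequality is meaningful.
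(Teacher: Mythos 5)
Your proof is correct and takes essentially the same route as the paper: the authors perform the same chain-rule computation of $\Delta^{\varphi}\Upsilon$ (organized through the intermediate variable $\eta^{2}$ rather than $\eta$ itself), insert item 5 of Lemma \ref{equations}, and discard exactly the nonnegative remainder $\vert\nabla\eta\vert^{2}/(1+\eta^{2})^{2}$ that you isolate, before rewriting $\eta^{2}/(1+\eta^{2})^{2}=(1-\Upsilon^{2})/(1+\eta^{2})$. Your closing observation that the final inequality $\geq 0$ requires $\ddot{\varphi}\leq 0$ together with $\vert\nabla\mu\vert^{2}=\eta^{2}-1\geq 0$ is a welcome addition, since the paper leaves that concavity hypothesis implicit in the lemma's statement.
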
 
\begin{proof}
By a straightforward computation,
$$\Delta^{\varphi}\Upsilon=\frac{\Delta^{\varphi}\eta^{2}}{2(1+\eta^{2})^{3/2}}-\frac{3\vert\nabla\eta^{2}\vert^{2}}{4(1+\eta^{2})^{5/2}}.$$
Moreover, from item 5. of Lemma \ref{equations} and multiplying in both members by $(1+\eta^{2})^{-1/2}$, we obtain that
$$\frac{\Delta^{\varphi}\Upsilon}{(1+\eta^{2})^{1/2}}\geq (\vert\mathcal{S}\vert^{2} -\ddot{\varphi}\vert\nabla\mu\vert^{2})\frac{\eta^{2}}{(1+\eta^{2})^{2}}-\frac{3\vert\nabla\eta^{2}\vert^{2}}{4(1+\eta^{2})^{3}}.$$
Finally, the proof follows taking into account that
$$1-\Upsilon^{2}=\frac{\eta^{2}}{1+\eta^{2}}  \ \ \text{and} \ \ \nabla\Upsilon=\frac{\nabla\eta^{2}}{2(1+\eta^{2})^{3/2}}$$
\end{proof}
\begin{lemma} \label{ll3}
Let $\Sigma$ be a complete spacelike $[\varphi,\vec{e}_{3}]$-maximal surface in $\mathbb{L}^{3}$. If $\ddot{\varphi} \leq0$ on $]a,b[$, then the generalized Omori-Yau maximum principle can be applied on $\Delta^\varphi$.
\end{lemma}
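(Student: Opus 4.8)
The plan is to produce a smooth exhaustion function on $\Sigma$ to which the test-function criterion for the generalized Omori-Yau maximum principle applies, exactly as in the proof of Theorem \ref{Main3}: by \cite[Theorem 3.2]{AMR} it suffices to exhibit $\gamma\in C^{2}(\Sigma)$ with $\gamma\to+\infty$ at infinity and with $|\nabla\gamma|$ and $\Delta^{\varphi}\gamma$ suitably controlled (e.g. $\Delta^{\varphi}\gamma\le G(\gamma)$ for an admissible $G$) outside a compact set. Thus the first task is to find a \emph{proper} function, and the whole difficulty will be concentrated in controlling the growth of $|\nabla\gamma|$ and $\Delta^{\varphi}\gamma$, because on a spacelike surface the hyperbolic angle $\eta$ is a priori unbounded; note that the usual route via a lower Bakry-\'Emery--Ricci bound and the intrinsic distance is unavailable here, since the Gauss curvature of $\Sigma$ has no a priori lower bound.

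First I would fix the topology. Writing the induced metric of the spacelike graph in horizontal coordinates gives $\langle\cdot,\cdot\rangle_{\L^{3}}\big|_{\Sigma}=\pi^{*}g_{0}-du\otimes du\le \pi^{*}g_{0}$, where $\pi:\Sigma\to\R^{2}$ is the vertical projection and $g_{0}$ the Euclidean plane metric. Hence completeness of $\Sigma$ forces $(\Sigma,\pi^{*}g_{0})$ to be complete, so $\pi$ is a Riemannian covering and, $\R^{2}$ being simply connected, a global diffeomorphism. Therefore $\Sigma$ is an entire graph and the horizontal radius $\rho_{0}:=x^{2}+y^{2}$ is a proper smooth function, $\rho_{0}\to+\infty$ at infinity.

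The key computation is that the \emph{horizontal} coordinate functions are $\Delta^{\varphi}$-harmonic. For a constant ambient vector $a\perp\vec{e}_{3}$ one has $\nabla\langle\psi,a\rangle=a^{\top}$, and combining the Gauss and Weingarten formulas with the maximality equation \eqref{a1L3} $H=\dot{\varphi}\,\eta$ one finds that the Hessian term $\langle a,N\rangle\,\mathcal{S}$ and the drift term exactly cancel, so $\Delta^{\varphi}\langle\psi,a\rangle=0$; in particular $\Delta^{\varphi}x=\Delta^{\varphi}y=0$. Since $N$ is a timelike unit field with $\eta=\langle N,\vec{e}_{3}\rangle_{\L^{3}}$, one gets $|\nabla x|^{2}+|\nabla y|^{2}=1+\eta^{2}$, whence
\[
\Delta^{\varphi}\rho_{0}=2\big(|\nabla x|^{2}+|\nabla y|^{2}\big)=2(1+\eta^{2}),\qquad |\nabla\rho_{0}|^{2}\le 4\,\rho_{0}\,(1+\eta^{2}).
\]
Thus $\rho_{0}$ is proper, but both its gradient and its drift-Laplacian are governed by the possibly unbounded quantity $\eta^{2}$.

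This is precisely where the hypothesis $\ddot{\varphi}\le 0$ must enter, and it is the step I expect to be the main obstacle. The idea is to replace $\rho_{0}$ by a corrected exhaustion $\gamma$ whose built-in $\eta$-dependence makes the two growth conditions self-consistent. Concavity is available through two facts proved above: by Lemma \ref{equations}(5), $\Delta^{\varphi}\eta=\eta\big(|\mathcal{S}|^{2}-\ddot{\varphi}|\nabla\mu|^{2}\big)$, so from $\eta\le-1$ and $\ddot{\varphi}\le 0$ we get $\Delta^{\varphi}\eta\le 0$ (hence $\eta$ is $\Delta^{\varphi}$-superharmonic); and by Lemma \ref{ll2} the \emph{bounded} function $\Upsilon=-(1+\eta^{2})^{-1/2}$ satisfies the sharp inequality \eqref{oylorentz}. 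The plan is to form $\gamma$ from $\rho_{0}$ together with a correction expressed through $\Upsilon$, chosen so that the negative contribution coming from $\Delta^{\varphi}\eta\le 0$ and from the manifestly nonnegative right-hand side $(|\mathcal{S}|^{2}-\ddot{\varphi}|\nabla\mu|^{2})\tfrac{1-\Upsilon^{2}}{1+\eta^{2}}$ of \eqref{oylorentz} absorbs the $\eta^{2}$ growth while keeping $|\nabla\gamma|\le\sqrt{G(\gamma)}$ consistent. The delicate point is that the correction must be \emph{bounded} so as not to destroy properness of $\gamma$: the naive candidates ($z^{2}$, or $c\,\eta$) either reintroduce the $+\eta^{2}$ term or are unbounded below where $\eta\to-\infty$, so one is forced to exploit the full strength of \eqref{oylorentz} rather than the crude bound $|\mathcal{S}|^{2}\ge H^{2}/2$. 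Once such a $\gamma$ is constructed and the two growth bounds are verified outside a compact set, \cite[Theorem 3.2]{AMR} yields the generalized Omori-Yau maximum principle for $\Delta^{\varphi}$, completing the proof.
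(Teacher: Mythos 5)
Your preliminary computations are all correct (the entire-graph reduction via the projection $\pi$, the $\Delta^{\varphi}$-harmonicity of the horizontal coordinates, $\vert\nabla x\vert^{2}+\vert\nabla y\vert^{2}=1+\eta^{2}$, and $\Delta^{\varphi}\eta\leq 0$), but the proof has a genuine gap exactly where you flag it: the exhaustion $\gamma$ is never constructed, and the construction you outline cannot work as described. To apply \cite[Theorem 3.2]{AMR} starting from $\rho_{0}$ you would need a \emph{bounded} correction $u$ with $\Delta^{\varphi}u\leq -2\eta^{2}+C$ wherever $\eta^{2}$ is large, and the tools you invoke give inequalities in the wrong direction. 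Since $-\Upsilon>0$, inequality \eqref{oylorentz} yields a \emph{lower} bound, $\Delta^{\varphi}\Upsilon\geq(-\Upsilon)^{-1}\bigl((\vert\mathcal{S}\vert^{2}-\ddot{\varphi}\vert\nabla\mu\vert^{2})\tfrac{1-\Upsilon^{2}}{1+\eta^{2}}-3\vert\nabla\Upsilon\vert^{2}\bigr)$, not an upper bound by a large negative quantity; likewise $\Delta^{\varphi}\eta=\eta\,(\vert\mathcal{S}\vert^{2}-\ddot{\varphi}\vert\nabla\mu\vert^{2})$ is nonpositive but need not be $\leq -c\,\eta^{2}$: when $\ddot{\varphi}\equiv 0$ (e.g.\ Lorentzian translators) one has $\Delta^{\varphi}\eta=\eta\vert\mathcal{S}\vert^{2}$, which can be arbitrarily small in modulus where $\eta$ is large. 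So nothing forces a bounded function built from $\eta$ or $\Upsilon$ to absorb the $2(1+\eta^{2})$ in $\Delta^{\varphi}\rho_{0}$, and the argument stalls at its central step.

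Moreover, the premise on which you discard the intrinsic route is mistaken, and that route is precisely the paper's proof. One does not need a lower bound on $Ric$ (equivalently on $K$) by itself, but on the Bakry--\'Emery tensor $Ric_{\varphi}=Ric-\nabla^{2}\varphi$, and here the Gauss equation combines with the maximality equation \eqref{a1L3} and item 3 of Lemma \ref{equations} to produce an exact cancellation: for every tangent vector $X$,
$Ric_{\varphi}(X,X)=\vert AX\vert^{2}_{\L^3}-\ddot{\varphi}\,\langle X,\nabla\mu\rangle^{2}_{\L^3}\geq 0$ when $\ddot{\varphi}\leq 0$. The possibly very negative Gauss-curvature term is converted into $\vert AX\vert^{2}$ by the drift contribution $\dot{\varphi}\,\eta\,\mathcal{S}(X,X)=H\,\mathcal{S}(X,X)$, with no a priori bound on $K$ required. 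With $Ric_{\varphi}\geq 0$ and completeness, the weighted Laplacian comparison of Wei--Wylie \cite[Theorem 1.1]{WW} applied to the intrinsic distance $r=d_{\Sigma}(\cdot,p_{0})$ gives $\Delta^{\varphi}r\leq C$ outside a compact set, and since $\vert\nabla r\vert=1$, \cite[Theorem 3.2]{AMR} applies directly --- a one-paragraph proof. Your extrinsic computations are not wasted (they re-prove that $\Sigma$ is an entire graph), but to complete the lemma you should switch to this Bakry--\'Emery argument, since your own analysis indicates that the corrected exhaustion you postulate does not exist along the lines proposed.
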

\begin{proof}
By taking the Bakry-\'Emery Ricci tensor which is defined by 
$$ Ric_\varphi = Ric - \nabla^2 \varphi,$$
where $Ric$ denotes the standard Ricci tensor on $\Sigma$, we have from the Gauss equation, Lemma \ref{ll2} and by a straightforward computation  that 
\begin{align*}
& Ric_\varphi(X,X) = Ric(X,X) - \nabla^2 \varphi(X,X) = \vert AX\vert^2_{\L^3} - \ddot{\varphi}\langle X,\nabla \mu\rangle_{\L^3}^2 \geq 0.
\end{align*}
Then, from completeness and by applying the weighted  Bochner's formula to the distance function $r(\, \cdot \, )= d_\Sigma (\cdot,p_0)$, see \cite[Theorem 1.1]{WW}, we have
$$ \Delta^\varphi r (p) \leq \frac{C}{r(p)} \leq C, \quad p\rightarrow \infty.$$
This expression together with the fact that $\vert \nabla r\vert=1$ allows us to apply Theorem 3.2 in \cite{AMR} and to  conclude the proof.
\end{proof}
\subsection{Proof of Theorem \ref{Main4}}
We may assume that $\eta$ is non constant on $\Sigma$ otherwise $\Sigma$ must be an horizontal plane and $\dot{\varphi}\equiv 0$. In this case, from Lemma \ref{ll3}, there exists a sequence $\{p_n\} \subset \Sigma$ such that,
\begin{align}
& \Upsilon(p_n)\rightarrow \sup_\Sigma\Upsilon =\Upsilon^*, \quad -\frac{1}{\sqrt{2}}< \Upsilon^* \leq 0,\label{le1}\\
&\Delta^{\varphi} \Upsilon(p_{n})<\frac{1}{n}, \text{ for any } n\in\mathbb{N}, \quad \vert \nabla \Upsilon\vert(p_n)\rightarrow 0.\label{le2}
\end{align}
Now, having in mind that $2\vert {\cal S}\vert^2 \geq H^2 = \dot{\varphi}^2 \eta^2$ and 
$$ \frac{\vert \nabla \mu \vert^2}{\eta^2} (p_n) \rightarrow \frac{-1}{\eta^2_*} + 1, \qquad \eta^2_* =\sup_\Sigma \eta^2,$$
if we plug the sequence in the inequality \eqref{oylorentz} and take the limit, then  
\begin{equation} \inf_\Sigma \dot{\varphi}^2 = 0\ \ \text{and}\ \  \sup_\Sigma \ddot{\varphi} = 0, \label{condlorentz}
\end{equation}
which concludes the proof.
\begin{proof}[]\end{proof}
\begin{corollary}
Let $\varphi:]a,b[\rightarrow \R$ be an analytic function with $\dot{\varphi}$  bounded  and satisying \eqref{hy1}. Then any complete spacelike $[\varphi,\vec{e}_3]$-maximal surface in $\L^3$ must be a plane.
\end{corollary}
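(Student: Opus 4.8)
The plan is to reduce to Theorem \ref{Main4} and then upgrade its conclusion using analyticity. Since \eqref{hy1} forces $\ddot{\varphi}\le 0$, the function $\varphi$ is concave and the hypotheses of Theorem \ref{Main4} are satisfied. Let $\Sigma$ be a complete spacelike $[\varphi,\vec{e}_3]$-maximal surface. If the angle function $\eta$ is constant, then $\Sigma$ is a horizontal plane and we are done, exactly as observed at the start of the proof of Theorem \ref{Main4}. Otherwise I would rerun that proof: Lemma \ref{ll3} (valid from completeness and $\ddot{\varphi}\le 0$) makes the generalized Omori--Yau principle available for $\Delta^{\varphi}$, and feeding the resulting sequence into the inequality \eqref{oylorentz} yields \eqref{condlorentz}, namely $\inf_\Sigma\dot{\varphi}^2=0$ and $\sup_\Sigma\ddot{\varphi}=0$.

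Next I would show that these two equalities, together with analyticity and the full strength of \eqref{hy1}, force $\dot{\varphi}\equiv 0$. Suppose not, and assume without loss of generality that $\varphi$ is increasing, so $\dot{\varphi}\ge 0$. Since $\dot{\varphi}$ is analytic and not identically zero, it cannot vanish on any subinterval; being nonincreasing (because $\ddot{\varphi}\le 0$), it must in fact be strictly positive throughout $]a,b[$ and can tend to $0$ only at an endpoint. Consequently $\inf_\Sigma\dot{\varphi}^2=0$ can occur only if the height $\mu(\Sigma)$ accumulates at that endpoint $c=\sup\mu(\Sigma)$. If $c$ lies in the interior of $]a,b[$, continuity gives $\dot{\varphi}(c)=0$, and monotonicity then forces $\dot{\varphi}\equiv 0$ on $[c,b[$; by the identity theorem for analytic functions this propagates to $\dot{\varphi}\equiv 0$ on all of $]a,b[$, contradicting our assumption. (The companion condition $\sup_\Sigma\ddot{\varphi}=0$, localized by the monotonicity of $\ddot{\varphi}$ coming from $\dddot{\varphi}\dot{\varphi}\ge 0$, is available as a second lever if one prefers to argue through $\ddot{\varphi}$.)

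The remaining possibility, where $\mu(\Sigma)$ escapes to the boundary of $]a,b[$ along the direction in which $\dot{\varphi}\to 0$, is the main obstacle: there the interior/identity-theorem argument does not apply, and one must instead exclude such height behaviour for a \emph{complete} surface. This is precisely where the boundedness of $\dot{\varphi}$ and the completeness of $\Sigma$ enter, via a barrier/comparison estimate preventing the height of a complete spacelike $[\varphi,\vec{e}_3]$-maximal surface from accumulating at $\partial]a,b[$ (equivalently, showing that such an escape would violate completeness); this is the delicate technical point of the argument. Once it is settled we obtain $\dot{\varphi}\equiv 0$, so $\varphi$ is constant, $\Sigma$ is a complete classical maximal surface in $\L^3$, and by the Calabi--Bernstein theorem (Calabi \cite{CA}, Aledo--Rubio--Romero \cite{ARR}) it must be a spacelike plane.
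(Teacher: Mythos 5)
Your opening reduction is exactly the paper's first move: \eqref{hy1} gives concavity, Theorem \ref{Main4} applies, and a surviving surface forces \eqref{condlorentz}. Your ``interior endpoint'' case also matches the paper (its case $\mu_0<\mu^*$ in \eqref{le4}, where monotonicity of $\dot{\varphi}$ plus the identity theorem give $\dot{\varphi}\equiv 0$ and a plane). The genuine gap is the case you yourself flag as ``the main obstacle'', and the repair you sketch aims at the wrong statement: you propose a barrier estimate showing that the height of a complete surface \emph{cannot} accumulate at $\partial\,]a,b[$, but in the non-planar case the height really is unbounded, and what the paper proves is that the endpoints must therefore be infinite, i.e.\ $]a,b[=\R$. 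Concretely: along the Omori--Yau sequence $\{p_n\}$ from the proof of Theorem \ref{Main4} one has $\vert\nabla\mu\vert^2(p_n)=\eta^2(p_n)-1\rightarrow \sup_\Sigma\eta^2-1>0$, whereas Ekeland's principle \cite[Proposition 2.2]{AMR} produces near-maximizing points with $\vert\nabla\mu\vert\rightarrow 0$ whenever $\sup_\Sigma\mu<+\infty$; playing these off against each other (that is \eqref{le3}--\eqref{le4}) yields $\sup_\Sigma\mu=+\infty$, hence $b=+\infty$. At the bottom, if $\inf_\Sigma\mu>-\infty$, the Omori--Yau principle of Lemma \ref{ll3} applied to $\mu$ itself, combined with the identity $\Delta^{\varphi}\mu=-\dot{\varphi}$ (immediate from Lemma \ref{equations} and \eqref{a1L3}), forces $\dot{\varphi}(q_n)\rightarrow 0$ along a minimizing sequence; monotonicity then gives $\dot{\varphi}\equiv 0$ and a plane again, so $a=-\infty$.

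Once $]a,b[=\R$, the two hypotheses you never use operatively close the argument in one line: by \eqref{hy1}, $\dot{\varphi}$ is (say) nonnegative, decreasing and \emph{convex} (this is $\dddot{\varphi\hspace{0pt}}\dot{\varphi}\geq 0$), and it is bounded by assumption; a bounded convex function on all of $\R$ is constant, and \eqref{condlorentz} makes that constant zero, so $\Sigma$ is a complete maximal surface and a plane by Calabi--Bernstein \cite{CA,ARR}. The example $\dot{\varphi}(t)=e^{-t}$ on $]0,+\infty[$ shows why your analyticity/identity-theorem reasoning cannot by itself dispose of the escape case: it is positive, decreasing, convex, bounded, analytic, with $\inf\dot{\varphi}^2=0$ and $\sup\ddot{\varphi}=0$ along heights tending to $+\infty$, so nothing in your first two paragraphs rules it out; only the height-unbounded-below argument (which contradicts $a=0$ there, and on $\R$ contradicts boundedness of $e^{-t}$) eliminates it. So your plan is sound up to \eqref{condlorentz} and in the interior case, but the decisive step is not a barrier preventing escape --- it is converting completeness into $\mu(\Sigma)=\R$ via Ekeland and Omori--Yau and then invoking convexity and boundedness of $\dot{\varphi}$.
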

\begin{proof}
From \eqref{hy1} we may assume that  $\dot{\varphi}\geq 0$, $\ddot{\varphi}\leq 0$ and $\dddot{\varphi}\geq 0$ on $]a,b[$, otherwise we could argue in a similar way. Thus, from Theorem \ref{Main4}, $\dot{\varphi}$  will be a nonnegative decreasing convex function satisfying \eqref{condlorentz}. 

We assert that either $\dot{\varphi}\equiv 0$ or $]a,b[ = \R$. 
In fact, let consider $\{p_n\}\subset \Sigma$ the sequence of points satisfying \eqref{le1} and \eqref{le2}, then 
\begin{align}
&\inf_n\vert \nabla \mu\vert^2(p_n) >0, \quad \dot{\varphi}(p_n) \rightarrow 0,\label{le3}\\
& \mu_0= \sup_n\mu(p_n) \leq \mu^* = \sup_\Sigma \mu \leq b.\label{le4}
\end{align}
If $\mu_0<\mu^*$, then $\dot{\varphi}\equiv 0$ on $]\mu_0,\mu^*[$ and, by analyticity,  $\Sigma$ must be a plane.  If not,  $\mu_0=\mu^*$ and then we can apply the Ekeland's principle \cite[Proposition 2.2]{AMR} and use  \eqref{le3} and \eqref{le4} to prove that $\mu^*=\sup_\Sigma\mu=+\infty$.

Now, consider  $\mu_* = \inf_\Sigma\mu$. If $\mu_*>-\infty$, from Lemma \ref{ll3}, there exists a sequence of points $\{q_n\}$ in $\Sigma$ satisfying
$$ \mu(q_n)\rightarrow \mu_*, \quad \inf \vert\nabla u\vert(q_n) = 0, \quad \Delta^\varphi \mu (q_n)> \frac{1}{n},  $$
but, as $\dot{\varphi}$ is decreasing in $]\mu_*,\infty[$,  we have that  $\inf_n\dot{\varphi}(q_n)>0$ (otherwise $\dot{\varphi}\equiv 0$ on $]\mu_*,\infty[$ and $\Sigma$ would be a plane) and then, from Lemma \ref{equations},  the following inequalities hold
$$ 0 \leq \inf_n\dot{\varphi}(q_n)= - \inf_n \eta^2(q_n) \dot{\varphi} (q_n) < 0, $$
which is a contradiction and our assertion is true.

The results follows because a decreasing convex and bounded smooth function $\dot{\varphi}:\R\rightarrow\R$ satisfying  \eqref{condlorentz} must be identically zero and so, $\Sigma$ is a plane.

\end{proof}

\end{document}